\documentclass[leqno,11pt]{amsart}
\usepackage{amsmath,amssymb}
\usepackage{color}
\usepackage{comment}

\usepackage{amsfonts}      
\usepackage{amsthm}         
\usepackage{bbding}         
\usepackage{bm}             
\usepackage{graphicx}   
\usepackage{fancyvrb}       
\usepackage{ragged2e}
			    
\usepackage{dcolumn}        
\usepackage{booktabs}       
\usepackage{paralist}       
\usepackage{xcolor}        
\usepackage[dvipsnames]{xcolor}

\usepackage{amssymb}
\usepackage{units}
\usepackage{mathtools}
\usepackage[colorlinks=true,linkcolor=blue]{hyperref}
\hypersetup{colorlinks=true,  linkcolor=blue, citecolor=green, urlcolor=cyan}

\allowdisplaybreaks

\theoremstyle{plain}
\newtheorem{thm}{Theorem}[section]

\newtheorem{prop}[thm]{Proposition}
\newtheorem{cor}[thm]{Corollary}

\theoremstyle{definition}
\newtheorem{defn}[thm]{Definition}
\newtheorem{rem}[thm]{Remark}

\newcommand{\Nat}{\mathbb{N}}

\newcommand{\R}{\mathcal{R}}
\newcommand{\M}{\mathcal{M}}
\renewcommand{\S}{\mathcal{S}}
\renewcommand{\d}{\,{\rm d}}

\newcommand{\emb}{\hookrightarrow}

\newcommand{\supp}{\rm{supp}\ }




\newcommand{\abs}[1]{\left|{#1}\right|}



\newcommand{\medint}{-\kern  -,435cm\int}
\newcommand{\medintinrigo}{-\kern  -,315cm\int}
\newcommand{\medelle}{-\kern  -,235cm L}
\newcommand{\medellenrigo}{-\kern  -,180cm L}


\newtoks\by
\newtoks\paper
\newtoks\book
\newtoks\jour
\newtoks\yr
\newtoks\pages
\newtoks\vol
\newtoks\publ
\newtoks\eds
\newtoks\proc
\newtoks\no
\def\ota{{\hbox{???}}}
\def\cLear{\by=\ota\paper=\ota\book=\ota\jour=\ota\yr=\ota
\pages=\ota\vol=\ota\publ=\ota}
\def\endpaper{\the\by, \textit{\the\paper},
{\the\jour} \textbf{\the\vol} (\the\yr), \the\pages.\cLear}
\def\endbook{\the\by, \textit{\the\book}, \the\publ.\cLear}
\def\endprep{\the\by, \textit{\the\paper}, \the\jour.\cLear}
\def\endproc{\the\by, \textit{\the\paper}, \the\publ, \the\pages.\cLear}


\setlength{\oddsidemargin}{0cm}
\setlength{\evensidemargin}{0cm}
 \setlength{\textwidth}{18.5cm} \setlength{\textheight}{23cm}
\setlength{\voffset}{-1cm}


\numberwithin{equation}{section}

\hyphenation{re-ar-ran-ge-ment-in-va-ri-ant}

\newcommand{\norm}[1]{{\left\vert\kern-0.25ex\left\vert\kern-0.25ex\left\vert
#1
    \right\vert\kern-0.25ex\right\vert\kern-0.25ex\right\vert}}

\usepackage[normalem]{ulem}
\usepackage{soul}
\usepackage{cancel}
\newcommand\Del[1]{{\color{red}\ifmmode\cancel{#1}\else\sout{#1}\fi}}

\usepackage[
	textsize=tiny,
	textwidth=3.2cm,
	colorinlistoftodos,
	backgroundcolor=teal!30!white,
	linecolor=teal!50!white
	]{todonotes}

\thanks{This research was supported in part by the grant 23-04720S of the Czech Science Foundation, grant UNCE24/SCI/005 of Charles University, GA UK no. 148125 of Charles University and grant SVV-2025-260827}

\begin{document}

\date{\today}

\title{Nonstandard Calderón-type theorems}

\author {David Kub\'i\v cek}

\address{David Kub\'i\v cek, Department of Mathematical Analysis\\
Faculty of Mathematics and Physics\\
Charles University\\
Sokolovsk\'a~83\\
186~00 Praha~8\\
Czech Republic}
\email{kubicek@karlin.mff.cuni.cz}
\urladdr{\href{https://orcid.org/0009-0003-4822-5268}{\texttt{0009-0003-4822-5268}}}

\subjclass[2020]{Primary 46E30, 46B70, 47G10; Secondary 47B38, 47A30}

\keywords{Calderón theorem, interpolation of operators, Lorentz spaces, rearrangement-invariant spaces}

\begin{abstract}
We establish Calderón-type theorems for operators bounded on nonstandard end-point Lorentz spaces \begin{equation*} T\colon L^{p_0, q_0}\to L^{p_1, q_1}\quad\text{and}\quad T\colon L^{q, 1}\to L^\infty \end{equation*} and the improvement of target spaces which is intimately connected with this. The emphasis will be placed on the cases $q_0=q_1$ and $q_1=\infty$.
\end{abstract}

\maketitle

\section{Introduction}
Let $(\R, \mu)$ and $(\S, \nu)$ be non-atomic $\sigma$-finite measure spaces. The standard Marcinkiewicz interpolation theorem, first introduced by J. Marcinkiewicz in 1939 and later generalized by A. Zygmund in 1956 \cite{Zy56}, asserts that if a quasilinear operator $T$ satisfies a pair of end-point estimates, 
\begin{equation}\label{eq:Marci}
    T\colon L^{p_0, 1}(\R, \mu)\to L^{q_0, \infty}(\S, \nu)\quad\text{and}\quad T\colon L^{p_1, 1}(\R, \mu)\to L^{q_1, \infty}(\S, \nu),
\end{equation}
where $1\leq p_0<p_1\leq\infty$ and $1\leq q_0, q_1\leq \infty, q_0\neq q_1$, then we can conclude that $T\colon L^{p, r}(\R, \mu)\to L^{q, r}$ for every $r\in[1 ,\infty]$ and every $p$ and $q$ which are defined by 
\begin{equation*}
    \frac{1}{p}=\frac{1-\theta}{p_0}+\frac{\theta}{p_1}\quad\text{and}\quad \frac{1}{q}=\frac{1-\theta}{q_0}+\frac{\theta}{q_1}
\end{equation*}
for some $\theta\in(0, 1)$. Here $L^{p, q}$ stands for standard Lorentz space, see \eqref{eq:LorentzSpace} for precise definition of its governing functional. Operators which satisfy \eqref{eq:Marci} are said to be of weak type $(p_0, q_0)$ and $(p_1, q_1)$. This result was furthermore generalized to what is now known as Calderón's fundamental interpolation theorem in \cite{Ca1966}. 
In essence, it describes the behaviour of operators satisfying end-point estimates \eqref{eq:Marci} on general rearrangement-invariant spaces in terms of boundedness of the so-called \emph{Calderón operator} on their representation spaces. The Calderón operator is intrinsically connected to \eqref{eq:Marci} and is defined by
\begin{equation*}
    S_mf(t)=t^{-\frac{1}{q_0}}\int_0^{t^m} s^{\frac{1}{p_0}-1}f(s)\d s+t^{-\frac{1}{q_1}}\int_{t^m}^\infty s^{\frac{1}{p_1}-1}f(s)\d s,
\end{equation*}  where $m=\frac{\frac{1}{q_0}-\frac{1}{q_1}}{\frac{1}{p_0}-\frac{1}{p_1}}$ is the \emph{slope} of the corresponding interpolation segment. 
Using Peetre's $K$-functional formula and Holmstedt's formulae, one can show that for any quasilinear operator $T$ satisfying \eqref{eq:Marci}, we have $\left(Tf\right)^*(t)\lesssim S_mf^*(t)$ for every $t>0$, a key estimate for establishing fine boundedness properties of $T$ on various function spaces, where the star denotes the operation $f\mapsto f^*$ of the non-increasing rearrangement of a given function (see definitions below). It should be noted that, in the classical situation, the Calder\'on operator is a sum of two integral type operators involving power weights. A unified treatise of weak-type operators was conducted for example in \cite{BeRu80}. 

Calderón's interpolation theorem has since been adapted into various nonstandard settings. This was motivated by the fact that several operators of great importance such as, for instance, the fractional maximal operator or the weighted Copson-type operator governing via a certain reduction principle optimal Sobolev embeddings have (different) nonstandard endpoint behaviours. Calder\'{o}n-type theorems for this sort of operators were established in \cite{GoPi2009}, in which one pair of weak-type estimates has been substituted by either boundedness from $L^{1}$ to $L^{p, 1}$ or from $L^{p, \infty}$ to $L^\infty$. A considerably more general situation, albeit using similar approach, was considered in~\cite{Mal:12}. Completely different techniques were used in \cite{BaGoMiPi22}, where a Calderón-type theorem was established for a special integral operator that reflects interpolation properties and embeddings of Gaussian-Sobolev spaces into Orlicz-Karamata spaces.

The results of~\cite{GoPi2009} revealed a surprising phenomenon. For operators that take boundedly $L^{p_0, 1}$ to $L^{q_0, \infty}$ and simultaneously $L^{p_1, \infty}$ to $L^\infty$, the Calder\'{o}n operator is once again written in the form of two suboperators, but this time one of the summands is a supremum-type operator (hence, remarkably, not linear). More precisely, the Calder\'on operator in these circumstances reads as
\begin{equation*}
    R_mf(t)=t^{-\frac{1}{q_0}}\int_0^{t^m} s^{\frac{1}{p_0}-1}f(s)\d s
    +
    \sup_{t^m\leq s<\infty}
    t^{\frac{1}{p_1}}f(t).
\end{equation*}
However, rather unexpectedly, it was shown that the latter operator has no essential meaning and can be dropped from the formula, since, when wrapped in a rearrangement-invariant norm, it is bounded above by the same norm of the former term. Consequently, via an application of the Hardy--Littlewood--P\'olya relation,  the suitable Calder\'on operator consists merely of the first summand. Needless to say that such phenomenon has absolutely \emph{no analogue} for operators having standard endpoint behaviour. 

It has recently been observed in \cite{CiPiSl20201, CiPiSl20202} while investigating trace embedding theorems and Sobolev embeddings involving Frostman measures that when one of the target spaces is $L^{\infty}(\S, \nu)$, one can, in fact, enhance the information obtained from interpolation. This improvement is captured by utilizing a structure denoted as $Y^{\langle p \rangle}(\S, \nu)$ where $Y(\S, \nu)$ is a rearrangement-invariant space. More precisely, these spaces are defined as the collection of all $\nu$-measurable functions $f$ satisfying 
\begin{equation}
    \|f\|_{Y^{\langle p \rangle}(\S, \nu)}\coloneqq\left\|t^{-\frac{1}{p}}\left(\int_0^t f^*(s)^p\d s\right)^\frac{1}{p}\right\|_{\bar Y(0,\nu(\S))}<\infty.
\end{equation} Detailed treatise of these spaces has been carried out in \cite{Tu23}. What is of importance to us is that these $Y^{\langle p\rangle}(\R, \mu)$ spaces continuously embed into $Y(\R, \mu)$. Moreover, these inclusions may be strict, and hence we gain an improvement of the information obtained through interpolation.

A Calderón-type interpolation theorem utilizing the structure of $Y^{\langle p\rangle}(\R, \mu)$ spaces has recently been established in \cite{MiPiSp}. Therein, the operators in question satisfy 
\begin{equation}\label{eq:Saw}
    T\colon L^{p}(\R, \mu)\to L^{p}(\S, \nu)\quad\text{and}\quad T\colon L^{q, \infty}(\R, \mu)\to L^{\infty}(\S, \nu).
\end{equation}
This research was motivated by the recent result of~\cite{Kor:24}, in which borderline case of a potential operator was investigated in connection with the Luzin $N$-property and the Morse-Sard property. The respective Calderón operator for operators satisfying \eqref{eq:Saw} is defined by 
\begin{equation}\label{eq:Srf}
    S_rf(t)=\left(\frac{1}{t}\int_0^{t^r}f^*(s)^p\d s\right)^\frac{1}{p}+\sup_{t^r\leq s<\infty}s^\frac{1}{q}f^*(s) \quad\text{for $t\in(0,\infty)$,}
\end{equation} 
for some $r>0$. Once again, due to the presence of the space $L^{q, \infty}(\R, \mu)$ on the domain side, we can omit the component acting around infinity. In result, the operator $S_r$ is bounded if and only if the map \begin{equation*}
    f\mapsto\left(\frac{1}{t}\int_0^{t^r}f^*(s)^p\d s\right)^\frac{1}{p}
\end{equation*} is.

In this paper, we will deal with operators enjoying the following mapping properties
\begin{equation}\label{eq:tpq}
    T\colon L^{p_0, q_0}(\R, \mu)\to L^{p_1, q_1}(\S, \nu)\quad\text{and}\quad T\colon L^{q, 1}(\R, \mu)\to L^\infty(\S, \nu)
\end{equation} 
where 
\begin{equation}\label{eq:parameters}
    1<p_0<q<\infty, \quad p_1\in(1, \infty)\quad \text{and}\quad 1\leq q_0\leq q_1\leq\infty.
\end{equation}
The motivation for studying such operators stems from the recent research of the action of the fractional maximal operator and the Riesz potential on spaces endowed with the capacitary measures and the Hausdorff content and their important applications, see~\cite{Kor:24,MiPiSp} and the references therein.

As the discussion later in this paper will make it clear, in our setting we will not be able to omit one of the Calderón suboperators via standard means as in the two examples we just discussed.
These end-point spaces will result in a new structure $Y^{\langle p, q_1\rangle}(\S, \nu)$, which generalize $Y^{\langle p\rangle}(\S, \nu)$, and are defined as the collection of all $\nu$-measurable functions satisfying 
\begin{equation*}
    \|f\|_{Y^{\langle p, q_1\rangle}(\S, \nu)}=\left\|t^{-\frac{1}{p}}\left(\int_0^t s^{\frac{q_1}{p}-1}f^*(s)^{q_1}\d s\right)^\frac{1}{q_1}\right\|_{\bar Y(0, \nu(\S))}<\infty.
\end{equation*}
The emphasis will be placed on the case where $q_0=q_1$ or $q_1=\infty$ as in this setting we will manage to establish a Calderón-type theorem. Afterwards, we will briefly comment on the case $q_0<q_1<\infty$ and the problems which arise herein.

The structure of the paper follows.

In the second section, we recall key notions, most importantly, rearrangement-invariant Banach function spaces, and fix conventions.

In the third section, we define $Y^{\langle p, q_1\rangle}(\S, \nu)$ as a natural generalization of $Y^{\langle p \rangle}(\S, \nu)$ for the setting of Lorentz spaces. We characterize the parameters for which the structure $Y^{\langle p, q_1\rangle}(\S, \nu)$ constitutes a rearrangement-invariant space, and establish certain nesting properties. 
Then we characterize operators fullfiling embeddings \eqref{eq:tpq} via $K$-functional and derive appropriate Calderón operators, $R_{q_0}$ and $S$, defined as
\begin{equation*}
    R_{q_0}f(t)= t^{-\frac{1}{p_1}}\left(\int_0^{t^r}s^{\frac{q_0}{p_0}-1}f^*(s)^{q_0}\d s\right)^\frac{1}{q_0}\quad
    \text{and}\quad
    Sf(t)=\int_{t^r}^\infty s^{\frac{1}{q}-1}f^*(s) \d s,
    \end{equation*}
where $r=\frac{p_0q}{p_1(q-p_0)}$. 
Finally, we characterize pairs of Lorentz spaces $L^{r_i, s_1}(0, \infty), i=1, 2$ for which we have $R_{q_0}\colon L^{r_1, s_1}\to L^{r_2, s_2}$ and $S\colon L^{r_1, s_1}\to L^{r_2, s_2}$ as it will both be useful in the fourth section and will provide us a~source of examples for potential applications. In the case of $R_{q_0}$, we achieve this by carefully translating the boundedness of $R_{q_0}$ into the calculation of $\left( L^{r_i, s_i}\right)^{\langle p, q_0\rangle}$.

The fourth and main section will then state and prove several Calderón-type theorems depending on the values of $q_0$ and $q_1$. In the first half of the section, we deal with the case $q_0=q_1$. Here we prove the main theorem, and, as an interesting consequence, we will obtain an extrapolation theorem.
The other half of the section will be devoted to the case $q_0<q_1$. This in turn will be split into two subparts. We will first deal with the case $q_1=\infty$, where we obtain a desirable Calderón-type theorem similar to that in the first half of the section. Afterwards, we make an alternative statement for the remaining case $q_0<q_1<\infty$.

\section{Preliminaries}
Throughout the paper, we write $A \lesssim B$ if $A$ is dominated by a constant multiple of $B$, independent of all quantities involved; these quantities will be in most cases evident from the context, occasionally, in cases of a~risk of confusion, we will specify them. By $A\approx B$ we mean that both $A \lesssim B$ and $A \gtrsim B$.

Let us recall essentials from the theory of \\ rearrangement-invariant Banach function spaces. For more details, see \cite{BS}.

Let $(\R, \mu)$ and $(\S, \nu)$ be non-atomic $\sigma$-finite measure spaces. We set 
\begin{align*}
	\M(\R, \mu)&=\{f\colon R\to [-\infty, \infty]: f\text{ is }\mu\text{-measurable in }\Omega\},\\
	\M_+(\R, \mu)&=\{f\in \M(\R, \mu): f\geq 0 \ \mu\text{-a.e.}\}.
\end{align*} 
If there is no risk of confusion we simply write $\M(\R)$ and $\M_+(\R)$. A real half-line $(0, \infty)$ equipped with the Lebesgue measure will be of particular interest to us and in this case we shall write $\M(0, \infty)$.

Given $f\in\M(\R)$ we define \emph{non-increasing rearrangement} of $f$, denoted $f^*$, by
\begin{equation*}
	f^*(t)=\inf\{ \lambda\geq 0: \mu(\{\abs{f}>\lambda\})\leq t\},\quad t\in(0, \infty).
\end{equation*} 
The non-increasing rearrangement is monotone, i.e.
\begin{equation*}
    \abs{f}\leq\abs{g}\ \mu\text{-a.e. implies }f^*\leq g^*.
\end{equation*} 
We further have \emph{Hardy-Littlewood inequality}
\begin{equation*}
    \int_\R \abs{f(x)g(x)}\d \mu(x)\leq\int_0^\infty f^*(t)g^*(t)\d t,\quad f, g\in\M(\R).
\end{equation*} 
The operation $f\mapsto f^*$ is not subadditive. However, it satisfies a weaker condition 
\begin{equation*}
    (f+g)^*(t_1+t_2)\leq f^*(t_1)+g^*(t_2),\quad t_1, t_2>0.
\end{equation*} 
We also have 
\begin{equation*}
    (fg)^*(t_1+t_2)\leq f^*(t_1)g^*(t_2),\quad t_1, t_2>0.
\end{equation*} 
If we pass to the so called \emph{maximal non-increasing rearrangement} defined by 
\begin{equation*}
	f^{**}(t)=\frac{1}{t}\int_0^t f^*(s)\d s,\quad f\in\M(\R), t\in (0, \infty),
\end{equation*} 
we obtain subadditivity. More precisely, we have $(f+g)^{**}\leq f^{**}+g^{**}$. Also note that $f^*\leq f^{**}$ for every $f\in\M(\R)$.

Having defined the non-increasing rearrangement, we can at last provide the central definition of the paper.

\begin{defn}
A mapping $\rho\colon \M_+(\R)\to [0, \infty]$ is called a \emph{rearrangement-invariant Banach function norm} if for all $f, g, f_n\in \M_+(\R), n\in\Nat$ and $a\geq 0$ the following holds:
\begin{enumerate}[(P1)]
	\item $\rho(f)=0\iff f=0\ \mu\text{-a.e.}$,\quad $\rho(af)=a\rho(f)$,\quad $\rho(f+g)\leq \rho(f)+\rho(g)$,
	\item $f\leq g\ \mu$-a.e. $\implies$ $\rho(f)\leq \rho(g)$,
	\item $f_n\nearrow f\ \mu\text{a.e. }\implies \rho(f_n)\nearrow \rho(f)$,
	\item $\rho(\chi_E)<\infty$ whenever $\mu(E)<\infty$,
	\item $\|f\|_{L^1(E)}\leq C_E\rho(f)$ for all $E\subset \R$ $\mu$-measurable with $\mu(E)<\infty$,
	\item $f^*=g^*\implies \rho(f)=\rho(g)$.
\end{enumerate}
\end{defn}
Given such a $\rho$, we define its \emph{associate functional} $\rho'$ by 
\begin{equation*}
    \rho'(g)=\sup_{\rho(f)\leq 1}\int_\R f(x)g(x)\d \mu(x),\quad g\in\M(\R).
\end{equation*} 
It then holds that $\rho'$, too, is a rearrangement-invariant norm and it obeys the \emph{principle of duality} $$\rho''\coloneqq (\rho')'=\rho.$$ \emph{Hardy-Littlewood-Pólya} principle asserts that 
\begin{equation}\label{eq:HLP}
	f^{**}\leq g^{**}\implies \rho(f)\leq\rho(g).
\end{equation}
We define space $X=X(\rho)$ as the collection of all $f\in\M(\R)$ such that $\rho(\abs{f})<\infty$. Denoting $\|f\|_X=\rho(\abs{f})$, we have that $(X, \|\cdot\|_X)$ is a Banach space. We refer to $X$ as a \emph{rearrangement-invariant Banach function space}, or briefly, a rearrangement-invariant space. Similarly, we denote $X'$ the rearrangement-invariant space corresponding to $\rho'$ and call it the \emph{associate space} of $X$.
Given rearrangement-invariant space $X$, its \emph{fundamental function} $\varphi_X\colon [0, \mu(\R))\to [0, \infty)$ is defined by 
\begin{equation*}
    \varphi_X(t)=\|\chi_E\|_X,\quad\mu(E)=t.
\end{equation*}

Let now $X(\R, \mu)$ and $Y(\S, \nu)$ be two rearrangement-invariant spaces and $T\colon \M_+(\R)\to \M_+(\S)$ be an operator. We say that $T$ is bounded from $X$ to $Y$ if 
\begin{equation*}
    \|Tf\|_Y\lesssim \|f\|_X,\quad f\in X,
\end{equation*} 
and denote this fact by $T\colon X\to Y$. If $X=Y$, we say that $T$ is bounded on $X$. Inclusions between rearrangement-invariant spaces are always continuous, i.e. 
\begin{equation*}
    X\subset Y\iff \operatorname{Id}\colon X\to Y.
\end{equation*} 
We say that an operator $T'$ on $\M_+(0, \infty)$ is an \emph{associate operator} of $T$ if
\begin{equation*}
    \int_0^\infty (Tf)(t)g(t)\d t=\int_0^\infty f(t)(T'g)(t)\d t,\quad f, g\in\M_+(0, \infty).
\end{equation*} 
We then have that 
\begin{equation*}
    T\colon X(0, \infty)\to Y(0, \infty)\iff T'\colon Y'(0, \infty)\to X'(0, \infty).
\end{equation*}

We will mainly be interested in the Lorentz $L^{p, q}(\R, \mu)$ spaces. To this end, let us recall their definition. Given $p, q\in (0, \infty]$, the Lorentz functional is defined by 
\begin{equation}\label{eq:LorentzSpace}
	\|f\|_{L^{p, q}(\R, \mu)}=\begin{cases} \left(\int_0^\infty t^{\frac{1}{p}-\frac{1}{q}}f^*(t)^q\d t\right)^\frac{1}{q},& q<\infty,\\ \sup_{t>0}t^\frac{1}{p}f^*(t),&q=\infty.\end{cases} 
\end{equation} 
The Lorentz space $L^{p, q}(\R, \mu)$ is \emph{equivalent} to a rearrangement-invariant space, in the sense that there exists a rearrangement-invariant norm $\|\cdot\|$ such that $\|f\|\approx\|_{p, q}$ for every $f\in\M(\R, \mu)$, if and only if
\begin{equation}\label{eq:lri}
    p=q=1\quad\text{or}\quad 1<p<\infty \text{ and } q\geq1\quad\text{or}\quad p=q=\infty.
\end{equation} 
In particular, when $p=q$, the Lorentz space $L^{p, p}(\R, \mu)$ coincides with the classical Lebesgue $L^p(\R, \mu)$ space.

Given two Banach spaces $X$ and $Y$ which embed into a common Hausdorff topological vector space, the \emph{$K$-functional} is defined as 
\begin{equation*}
    K(f, t, X, Y)=\inf_{f=f_0+f_1}\left(\|f_0\|_X+t\|f_1\|_Y\right).
\end{equation*}

The Holmstedt formulae (see \cite[Chapter 5, Theorem 1.9 and Theorem 2.1]{BS}) assert that if $1<p_0<q<\infty$ and $1\leq q_0\leq \infty$, then for $\frac{1}{\alpha}=\frac{1}{p_0}-\frac{1}{q}$ we have 
\begin{align*}
	K\left(f, t, L^{p_0, q_0}(\R, \mu), L^{q, 1}(\R, \mu)\right)&\approx \left(\int_0^{t^\alpha}s^{\frac{q_0}{p_0}-1}f^{*}(s)^{q_0}\d s\right)^\frac{1}{q_0}+t\int_{t^\alpha}^\infty s^{\frac{1}{q}-1}f^*(s)\d s
\intertext{for every $t>0$. Moreover, if $1<p_1<\infty$ and $1\leq q_1\leq\infty$, we have}
	K\left(f, t, L^{p_1, q_1}(\R, \mu), L^{\infty}(\R, \mu)\right)&\approx \left(\int_0^{t^{p_1}}s^{\frac{q_1}{p_1}-1}f^*(s)^{q_1}\d s\right)^\frac{1}{q_1}
\end{align*} with standard interpretations when either $q_0=\infty$ or $q_1=\infty$.

\section{Basic functional properties and examples}
Let us fix parameters for the remainder of the paper as in \eqref{eq:parameters} and, moreover, define $r=\frac{p_0q}{p_1(q-p_0)}$. In this section we provide a general treatise of operators exhibiting two end-point estimates 
\begin{equation}\label{eq:pq}
    T\colon L^{p_0, q_0}(\R, \mu)\to L^{p_1, q_1}(\S, \nu)\quad\text{and}\quad T\colon L^{q, 1}(\R, \mu)\to L^\infty(\S, \nu),
\end{equation} as well as the mapping properties of their associate Calderón operators.

Let us begin with the definition of the space $X^{\langle p_0, q_0\rangle}(\R, \mu)$ which will play a key role in the main theorem.
\begin{defn}
Let $p_0$ and $q_0$ be as in \eqref{eq:parameters} and let $X$ be a rearrangement-invariant space. We define $\|\cdot\|_{X^{\langle p_0, q_0\rangle}}\colon \M(\R, \mu)\to [0, \infty]$ by
\begin{align*}
    \|f\|_{X^{\langle p_0, q_0\rangle}}\begin{cases}=\left\|t^{-\frac{1}{p_0}}\left(\int_0^t s^{\frac{q_0}{p_0}-1}f^*(s)^{q_0}\d s\right)^\frac{1}{q_0}\right\|_{\bar X},\quad &q_0<\infty, \\
    =\left\|t^{-\frac{1}{p_0}}\sup_{0<s\leq t}s^\frac{1}{p_0}f^*(s)\right\|_{\bar X},\quad &q_0=\infty.\end{cases}
\end{align*} 
for every $f\in\M(\R, \mu)$. We then denote 
\begin{equation*}
    X^{\langle p_0, q_0\rangle}(\R, \mu)=\{f\in\M(\R, \mu):\|f\|_{X^{\langle p_0, q_0\rangle}}<\infty\}.
\end{equation*}
\end{defn}

Let us observe that for every $1\leq q_0\leq q_1\leq \infty$ the embeddings
\begin{equation}\label{eq:EmbeddingsXpq}
    X^{\langle p_0, q_0\rangle}(\R, \mu)\emb X^{\langle p_0, q_1\rangle}(\R, \mu)\emb X(\R, \mu)
\end{equation} 
hold with constants being irrespective of the underlying measure space. Indeed, the first embedding holds by virtue of the nesting property of Lorentz spaces, while the latter holds because the inequality
\begin{equation*}
    f^*(t)\leq t^{-\frac{1}{p_0}}\left(\int_0^t s^{\frac{q_1}{p_0}-1}f^*(s)^{q_1}\d s\right)^\frac{1}{q_1}
\end{equation*}
is true for every $f\in M^+(0, \infty), t>0$ and $q_1\in [1, \infty]$.

The following proposition deals with the question when $X^{\langle p_0, q_0\rangle}(\R, \mu)$ is a rearrangement-invariant space.

\begin{prop}\label{prop:Lorentzi}
Let $p_0$ and $q_0$ be as in \eqref{eq:parameters} and let $X(\R, \mu)$ be a rearrangement-invariant space. Then $X^{\langle p_0, q_0\rangle}(\R, \mu)$ is equivalent to a rearrangement-invariant space if and only if \begin{equation}\label{eq:nontriviality}
    \min\{1, t^{-\frac{1}{p_0}}\}\in \bar X.
\end{equation} In particular, when $\mu(\R)<\infty$, $X^{\langle p_0, q_0\rangle}(\R, \mu)$ is always a rearrangement-invariant space.
\end{prop}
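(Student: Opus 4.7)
My plan is to split the claim into the two directions. Necessity reduces to a direct computation using property (P4); sufficiency requires replacing $f^*$ by its maximal rearrangement $f^{**}$ to promote the (only quasi-subadditive) functional to a genuine norm up to equivalence.

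For necessity, suppose $X^{\langle p_0, q_0\rangle}(\R, \mu)$ is equivalent to an r.i.\ space. Property (P4) forces $\|\chi_E\|_{X^{\langle p_0, q_0\rangle}}$ to be finite for any $E$ with $\mu(E)<\infty$. If $\mu(\R)\geq 1$, take $E$ with $\mu(E)=1$; evaluating the inner integral explicitly gives
\begin{equation*}
\|\chi_E\|_{X^{\langle p_0, q_0\rangle}} = (p_0/q_0)^{1/q_0}\bigl\|\min\{1, t^{-1/p_0}\}\bigr\|_{\bar X(0, \mu(\R))},
\end{equation*}
from which \eqref{eq:nontriviality} follows. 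If $\mu(\R)<1$, then $\min\{1, t^{-1/p_0}\}\equiv 1$ on $(0, \mu(\R))$ and already belongs to $\bar X$ by (P4) applied to $X$ itself, so \eqref{eq:nontriviality} is automatic and the ``in particular'' statement is included.

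For sufficiency, assume \eqref{eq:nontriviality}. Since $(f+g)^* \not\leq f^* + g^*$ pointwise, subadditivity of the functional is not immediate. I would introduce the auxiliary functional
\begin{equation*}
\tilde\rho(f) = \left\|t^{-1/p_0}\left(\int_0^t s^{q_0/p_0-1} f^{**}(s)^{q_0}\d s\right)^{1/q_0}\right\|_{\bar X(0, \mu(\R))}
\end{equation*}
(with the analogous supremum-based modification when $q_0=\infty$) and first show $\tilde\rho \approx \|\cdot\|_{X^{\langle p_0, q_0\rangle}}$. The lower bound $\|f\|_{X^{\langle p_0, q_0\rangle}} \leq \tilde\rho(f)$ follows at once from $f^*\leq f^{**}$. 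The reverse bound is a localized weighted Hardy inequality, equivalent to the well-known equivalence of the two standard definitions of the Lorentz $L^{p_0, q_0}$-norm; it is valid precisely because $p_0 > 1$ in \eqref{eq:parameters}. Concretely, I would apply the Hardy inequality to $f^*\chi_{(0, t)}$ and discard the tail.

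It then remains to verify the r.i.\ norm axioms (P1)--(P6) for $\tilde\rho$. The crucial axiom is subadditivity in (P1), which now follows cleanly from the pointwise inequality $(f+g)^{**}\leq f^{**}+g^{**}$, Minkowski's inequality in $L^{q_0}(s^{q_0/p_0-1}\d s)$, and monotonicity of $\bar X$. Property (P4) reduces to the computation from the necessity direction together with dilation invariance in $\bar X$. Property (P5) is inherited from $X$ via the embedding $X^{\langle p_0, q_0\rangle}\hookrightarrow X$ already recorded in \eqref{eq:EmbeddingsXpq}. The remaining (P2), (P3), (P6) are routine consequences of the monotonicity and order-invariance of $f\mapsto f^{**}$ together with the corresponding axioms for $\bar X$. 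The main obstacle throughout is precisely the failure of subadditivity for $f^*$; the assumption $p_0>1$ from \eqref{eq:parameters} is exactly what makes the circumvention via $f^{**}$ possible.
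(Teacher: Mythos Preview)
Your proposal is correct and follows essentially the same strategy as the paper: pass from $f^*$ to $f^{**}$ (using $p_0>1$) to obtain an equivalent genuine norm, inherit (P5) from the embedding $X^{\langle p_0,q_0\rangle}\hookrightarrow X$, and identify (P4) with condition \eqref{eq:nontriviality} by computing $\|\chi_E\|_{X^{\langle p_0,q_0\rangle}}$ explicitly. One small imprecision: for (P4) with general $\mu(E)=t_0$ you should not invoke ``dilation invariance in $\bar X$'' (which need not hold), but rather the elementary pointwise equivalence $\min\{1,(t/t_0)^{-1/p_0}\}\approx_{t_0}\min\{1,t^{-1/p_0}\}$, which is exactly what the paper uses.
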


\begin{proof}
To see that (P1) holds, recall that $p_0>1$, and so $\|f^*\|_{L^{p_0, q_0}(0, t)}\approx \|f^{**}\|_{L^{p_0, q_0}(0, t)}$ for every $t\in(0,\infty)$ with the constant being independent of $t>0$. Hence it follows that $\|f\|_{X^{\langle p_0, q_0\rangle}(\R, \mu)}\approx \|f^{**}\|_{\bar X^{\langle p_0, q_0\rangle}(\R, \mu)}, f\in\M(\R, \mu)$, the latter being a norm. Axioms (P2), (P3) and (P6) are seen to be satisfied. The axiom (P5) follows from \eqref{eq:EmbeddingsXpq}, since $X$ enjoys this property. Now, let $E\subset \R$ be a set with $t_0\coloneqq\mu(E)<\infty$. Since
\begin{equation*}
	\|\chi_{E}\|_{L^{p_0, q_0}(0, t)}=\left(\int_0^t s^{\frac{q_0}{p_0}-1}\chi_{(0, t_0)}(s)\d s\right)^\frac{1}{q_0}\approx \min\{t^\frac{1}{p_0}, t_0^\frac{1}{p_0}\},
\end{equation*} 
fixing $t_0$ yields that $\min\{t^\frac{1}{p_0}, t_0^\frac{1}{p_0}\}\approx \min\{t^\frac{1}{p_0}, 1\}, t>0$. Hence it follows that
\begin{equation*}
    \|\chi_E\|_{X^{\langle p_0, q_0\rangle}(\R, \mu)}<\infty\iff \left\|\min\{1, t^{-\frac{1}{p_0}}\}\right\|_{\bar X}<\infty.
\end{equation*}
This solves the case when $q_0<\infty$. The argument regarding the remaining case $q_0=\infty$ follows the same lines.
If $\mu(\R)<\infty$, then $\min\{1, t^{-\frac{1}{p_0}}\}\leq 1\in \bar X$, finishing the proof.
\end{proof}

\begin{rem}\label{rem:nontriviality}
    From the proof of Proposition~\ref{prop:Lorentzi}, it follows that the space $X^{\langle p_0, q_0\rangle}(\R, \mu)$ is rearrangement invariant if and only if it is nonempty which is characterized by \eqref{eq:nontriviality}.
\end{rem}

Let us note that the condition $\min\{1, t^{-\frac{1}{p_0}}\}\in \bar X$ is equivalent to inclusion $(L^{p_0, \infty}\cap L^\infty)(\R, \mu)\subset X(\R, \mu)$.

As for the inclusions in \eqref{eq:EmbeddingsXpq}, they may or may not be strict.
Indeed, let us first consider $L^\infty$. Then for every $t>0$ we have
\begin{equation}
    t^{-\frac{1}{p_0}}\int_0^t s^{\frac{1}{p_0}-1}f^*(s)\d s\leq \|f\|_\infty t^{-\frac{1}{p_0}}\int_0^t s^{\frac{1}{p_0}-1}\d s\approx \|f\|_\infty.
\end{equation}  
Taking the supremum over $t>0$ yields $L^\infty\emb (L^{\infty})^{\langle p_0, 1\rangle}$. Recalling \eqref{eq:EmbeddingsXpq}, we obtain $L^\infty=(L^\infty)^{\langle {p_0}, q_0\rangle}$ for every $q_0\in [1, \infty]$.

As for strict inclusions, consider $X=L^{p_0, \infty}$. Then, realizing that 
\begin{equation*}
    t\mapsto t^{-\frac{1}{p_0}}\left(\int_0^t s^{\frac{q_0}{p_0}-1} f^*(s)^{q_0}\d s\right)^\frac{1}{q_0} 
\end{equation*} 
is non-increasing, as a constant multiple of an integral mean of a non-increasing function with respect to the measure $s^{\frac{q_0}{p_0}-1}\d s$, we have 
\begin{equation}\label{eq:pq0}
    \sup_{t>0}t^\frac{1}{p_0}\left(\tau^{-\frac{1}{p_0}}\left(\int_0^\tau s^{\frac{q_0}{p_0}-1}f^*(s)^{q_0}\d s\right)^\frac{1}{q_0}\right)^*(t)= \sup_{t>0}\left(\int_0^t s^{\frac{q_0}{p_0}-1}f^*(s)^{q_0}\d s\right)^\frac{1}{q_0}=\|f\|_{p_0, q_0}.
\end{equation}

From now on, we will always assume that the spaces $X^{\langle p_0, q_0\rangle}(\R, \mu)$ are nontrivial, i.e.
\begin{equation*}
    (L^\infty\cap L^{p_0, \infty})(\R, \mu)\subset X(\R, \mu).
\end{equation*}

\begin{defn}
Let parameters be as in \eqref{eq:parameters}. We define operators $R_{q_0}$ and $S$ on $\M^+(0, \infty)$ by
\begin{align*}
R_{q_0}f(t)&= t^{-\frac{1}{p_1}}\left(\int_0^{t^r}s^{\frac{q_0}{p_0}-1}f^*(s)^{q_0}\d s\right)^\frac{1}{q_0}\\
\intertext{and}
Sf(t)&=\int_{t^r}^\infty s^{\frac{1}{q}-1}f^*(s)\d s.
\end{align*}
\end{defn}

\begin{prop}\label{prop:KINT}
Let parameters be as in \eqref{eq:parameters} and $T\colon \M(\R, \mu)\to \M(\S, \nu)$ be a quasilinear operator defined on $(L^{p_0, q_0}+L^{q, 1})(\R, \mu)$. Then the following statements are equivalent.
\begin{enumerate}[(i)]
	\item The operator $T$ satisfies \eqref{eq:tpq}.
	\item One has 
        \begin{equation*}
	           K(Tf, t, L^{p_1, q_1}, L^\infty)\lesssim K(f, t, L^{p_0, q_0}, L^{q, 1}),\quad f\in\M(\R, \mu), t>0.
	       \end{equation*}
	\item The inequality
        \begin{equation*}
	           t^{-\frac{1}{p_1}}\left(\int_0^{t}s^{\frac{q_1}{p_1}-1}\abs{Tf}^*(s)^{q_1}\d s\right)^\frac{1}{q_1}\lesssim R_{q_0}f^*(t)+Sf^*(t)\quad \text{holds for all $f\in \M(\R, \mu), t\in (0, \nu(\S))$.}
	       \end{equation*}
\end{enumerate}
\end{prop}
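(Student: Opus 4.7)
I would establish the chain of implications (i)$\Rightarrow$(ii)$\Leftrightarrow$(iii)$\Rightarrow$(i), using the Holmstedt formulae stated at the end of the previous section as the backbone.

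\emph{Step 1 — (i)$\Rightarrow$(ii).} This is the classical decomposition argument. Given any splitting $f=f_0+f_1$ with $f_0\in L^{p_0,q_0}(\R,\mu)$ and $f_1\in L^{q,1}(\R,\mu)$, quasilinearity of $T$ yields $\abs{Tf}\lesssim \abs{Tf_0}+\abs{Tf_1}$, so that
\begin{equation*}
   K(Tf,t,L^{p_1,q_1},L^\infty)\leq \|Tf_0\|_{L^{p_1,q_1}}+t\|Tf_1\|_{L^\infty}
   \lesssim \|f_0\|_{L^{p_0,q_0}}+t\|f_1\|_{L^{q,1}}.
\end{equation*}
Taking the infimum over all admissible decompositions on the right-hand side gives~(ii).

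\emph{Step 2 — (ii)$\Leftrightarrow$(iii).} The key observation is that by Holmstedt's formulae recalled in the Preliminaries,
\begin{equation*}
   t^{-\frac{1}{p_1}}\!\left(\int_0^{t}s^{\frac{q_1}{p_1}-1}\abs{Tf}^*(s)^{q_1}\d s\right)^{\!\frac{1}{q_1}}
   \approx t^{-\frac{1}{p_1}} K\!\bigl(Tf,t^{\frac{1}{p_1}},L^{p_1,q_1},L^\infty\bigr),
\end{equation*}
and, with $\alpha=p_0q/(q-p_0)$, so that $\alpha/p_1=r$,
\begin{equation*}
   t^{-\frac{1}{p_1}}K\!\bigl(f,t^{\frac{1}{p_1}},L^{p_0,q_0},L^{q,1}\bigr)
   \approx t^{-\frac{1}{p_1}}\!\left(\int_0^{t^r}\!\!s^{\frac{q_0}{p_0}-1}f^*(s)^{q_0}\d s\right)^{\!\frac{1}{q_0}}
   +\int_{t^r}^\infty\!\!s^{\frac{1}{q}-1}f^*(s)\d s,
\end{equation*}
the latter expression being precisely $R_{q_0}f^*(t)+Sf^*(t)$. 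Thus (ii) evaluated at the parameter $t^{1/p_1}$ and divided by $t^{1/p_1}$ is, up to equivalence constants, exactly the inequality in~(iii); the equivalence of (ii) and (iii) follows immediately.

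\emph{Step 3 — (ii)$\Rightarrow$(i).} Since $K(g,t,X,Y)$ is non-decreasing in $t$ and converges to $\|g\|_X$ as $t\to\infty$, letting $t\to\infty$ in (ii) recovers the estimate $\|Tf\|_{L^{p_1,q_1}}\lesssim\|f\|_{L^{p_0,q_0}}$. Since $K(g,t,X,Y)/t$ is non-increasing in $t$ and converges to $\|g\|_Y$ as $t\to 0^+$, dividing (ii) by $t$ and letting $t\to 0^+$ yields $\|Tf\|_{L^\infty}\lesssim\|f\|_{L^{q,1}}$. This gives both end-point estimates of~(i).

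\emph{Expected difficulty.} None of the three steps is genuinely hard; the crux is the bookkeeping in Step 2, where one must verify that the substitution $t\mapsto t^{1/p_1}$ in the Holmstedt representation of $K(\cdot,\cdot,L^{p_0,q_0},L^{q,1})$ produces exactly the exponent $r=p_0 q/(p_1(q-p_0))$ that defines $R_{q_0}$ and $S$. This is purely arithmetic, but it is the only place where the specific value of $r$ enters, and it is what forces the particular form of the Calderón operators.
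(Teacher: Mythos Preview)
Your overall strategy coincides with the paper's, but there is one genuine gap in Step~2. The inequality in~(iii) is asserted only for $t\in(0,\nu(\S))$. When $\nu(\S)=\infty$ your substitution $t\mapsto t^{1/p_1}$ is a bijection of $(0,\infty)$ and the equivalence is indeed immediate, but when $\nu(\S)<\infty$ it recovers~(ii) only on the range $t\in\bigl(0,\nu(\S)^{1/p_1}\bigr)$, and you still owe the $K$-inequality for $t\ge\nu(\S)^{1/p_1}$. The paper treats this case separately: finiteness of $\nu(\S)$ forces $L^\infty(\S,\nu)\hookrightarrow L^{p_1,q_1}(\S,\nu)$, whence for any decomposition one gets
\[
K(Tf,t,L^{p_1,q_1},L^\infty)\le\|Tf\|_{p_1,q_1}\lesssim K\bigl(Tf,\nu(\S)^{1/p_1},L^{p_1,q_1},L^\infty\bigr)
\]
for every such $t$; the right-hand side is controlled by~(iii) at the endpoint, and monotonicity of $t\mapsto K(f,t,L^{p_0,q_0},L^{q,1})$ finishes. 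Since your route from~(iii) back to~(i) passes through~(ii), without this patch the cycle does not close.

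A smaller point on Step~3: the assertion $\lim_{t\to0^+}K(g,t,X,Y)/t=\|g\|_Y$ is not a general fact about $K$-functionals for arbitrary compatible couples. For the specific pairs at hand it does hold, and the cleanest way to see it is via the Holmstedt representation of the target $K$-functional (which is exactly how the paper argues this implication). You should invoke that explicitly rather than appeal to an abstract limit.
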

\begin{proof}
The implication $(i)\Rightarrow(ii)$ is an immediate consequence of the $K$-inequality (\cite[Chapter 5, Theorem 1.11]{BS}). Let us now show the converse implication. First, by the Holmstedt formula, we have
\begin{equation}\label{eq:KLPLinf}
    K(Tf, t, L^{p_1, q_1}, L^\infty)\approx\left(\int_0^{t^{p_1}}s^{\frac{q_1}{p_1}-1}\abs{Tf}^*(s)^{q_1}\d s\right)^\frac{1}{q_1},\quad f\in\M^+(\R, \mu), t>0.
\end{equation} 
Consequently,
\begin{equation*}
    \|Tf\|_{p_1, q_1}=\lim_{t\to\infty}\left(\int_0^{t^{p_1}}s^{\frac{q_1}{p_1}-1}\abs{Tf}^*(s)^{q_1}\d s\right)^\frac{1}{q_1}\lesssim \lim_{t\to\infty}K(f, t, L^{p_0, q_0}, L^{q, 1})\leq \|f\|_{p_0, q_0},\quad f\in\M^+(\R, \mu).
\end{equation*} 
Similarly,
\begin{align*}
    \|Tf\|_\infty=\lim_{t\to0^+}\abs{Tf}^*(t^{p_1})&\approx\lim_{t\to0^+}\frac{\abs{Tf}^*(t^{p_1})}{t}\left(\int_0^{t^{p_1}}s^{\frac{q_1}{p_1}-1}\d s\right)^\frac{1}{q_1}\\
	&\leq \lim_{t\to 0^+}\frac{1}{t}\left(\int_0^{t^{p_1}}s^{\frac{q_1}{p_1}-1}\abs{Tf}^*(s)^{q_1}\d s\right)^\frac{1}{q_1}\lesssim \lim_{t\to 0^+}\frac{K(f, t, L^{p_0, q_0}, L^{q, 1})}{t}\leq \|f\|_{q, 1}
\end{align*} 
for every $f\in\M^+(\R, \mu)$.
To show the implication $(ii)\Rightarrow(iii)$, we once again invoke Holmstedt formula and recall the definitions of operators $R_{q_0}$ and $S$ to obtain
\begin{equation*}
    K(f, t, L^{p, q_0}, L^{q, 1})\approx tR_{q_0}f^*(t^{p_1})+tSf^*(t^{p_1}),\quad f\in\M^+(\R, \mu), t>0.
\end{equation*} 
Putting this together with \eqref{eq:KLPLinf} yields
\begin{equation*}
    \left(\int_0^{t^{p_1}}s^{\frac{q_1}{p_1}-1}\abs{Tf}^*(s)^{q_1}\d s\right)^\frac{1}{q_1}\lesssim tR_{q_0}f^*(t^{p_1})+tSf^*(t^{p_1}),\quad f\in\M^+(\R, \mu), t>0.
\end{equation*} 
Dividing by $t$ and passing from $t$ to $t^\frac{1}{p_1}$ yields $(iii)$.

It thus remains to show that $(iii)$ implies $(ii)$ for $t>\nu(\S)^\frac{1}{p_1}$ when $\nu(\S)<\infty$, since the case $\nu(\S)=\infty$ was covered by the previous computation. Since $\nu(\S)<\infty$, we have $L^\infty(\S, \nu)\subset L^{p_1, q_1}(\S, \nu)$. Now, let $f\in(L^{p_1, q_1}+L^\infty)(\S, \nu)=L^{p_1, q_1}(\S, \nu)$ be given and let $f=f_0+f_1$ be any decomposition such that $f_0\in L^{p_1, q_1}(\S, \nu)$ and $f_1\in L^\infty(\S, \nu)$. Then for $t>\nu(\S)^{\frac{1}{p_1}}$ we have
\begin{equation*}
    K(f, t, L^{p_1, q_1}, L^\infty)\leq \|f\|_{p_1, q_1}\leq \|f_0\|_{p_1, q_1}+\|f_1\|_{p_1, q_1}\lesssim \|f_0\|_{p_1, q_1}+\nu(\S)^\frac{1}{p_1}\|f_1\|_\infty.
\end{equation*}
Consequently,
\begin{equation*}
	K(Tf, t, L^{p_1, q_1}, L^\infty)\lesssim K(Tf, \nu(\S)^{\frac{1}{p_1}}, L^{p_1, q_1}, L^\infty)\lesssim K(f, \nu(\S)^{\frac{1}{p_1}}, L^{p_0, q_0}, L^{q, 1})\leq K(f, t, L^{p_0, q_0}, L^{q, 1})
\end{equation*} 
for every $f\in \M(\R, \mu)$ and $t>\nu (\S)^\frac{1}{p_1}$.
\end{proof}

For the future reference, we now provide particular examples of spaces $X$ and $Y$ that admit the boundedness of operators $R_{q_0}$ and $S$ from the family of normable Lorentz spaces. This will be of particular interest for applications and, moreover, we will need a few special cases in upcoming results. 
As a by-product of the proof, we find an explicit form of the space $\left(L^{r_1, s_1}\right)^{\langle p_0, q_0\rangle}$, see Remark~\ref{rem:Lorentzlorentz} for precise statement. Moreover, for the remainder of the section, we shall write $L^{r_i, s_i}$ briefly for $L^{r_i, s_i}(0, \infty)$.

\begin{prop}\label{prop:Rq0}
Let parameters be as in \eqref{eq:parameters} and let $r_1, r_2, s_1, s_2$ be such that the Lorentz spaces $L^{r_i, s_i}, i=1, 2$, are equivalent to rearrangement-invariant spaces, see \eqref{eq:lri}. Then the operator $R_{q_0}$ is bounded from $L^{r_1, s_1}$ to $L^{r_2, s_2}$ if and only if 
\begin{equation}\label{eq:r1r2}
    \frac{1}{q}+\frac{1}{rr_2}=\frac{1}{r_1}
\end{equation} 
and one of the following conditions holds:
\begin{enumerate}[(i)]
    \item $r_1=p_0,\quad s_1\leq q_0\quad\text{and}\quad s_2=\infty$,
    \item $r\in(p_0, q]\quad\text{and}\quad 1\leq s_1\leq s_2\leq\infty$.
\end{enumerate}
\end{prop}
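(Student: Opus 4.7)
The approach is to reduce the boundedness of $R_{q_0}$ to a classical weighted Hardy-type inequality for non-increasing functions by translating it through the structure $(L^{r_1,s_2})^{\langle p_0,q_0\rangle}$, following the remark before the statement. Since $R_{q_0}f$ depends on $f$ only through $f^*$, assume $f=f^*$. The algebraic identity $\frac{r}{p_0}-\frac{1}{p_1}=\frac{r}{q}$, immediate from $r=\frac{p_0 q}{p_1(q-p_0)}$, yields the factorisation
\begin{equation*}
R_{q_0}f(t)=t^{r/q}H(t^r),\qquad H(u):=u^{-1/p_0}\left(\int_0^u s^{q_0/p_0-1}f(s)^{q_0}\d s\right)^{1/q_0},
\end{equation*}
where $H$ is precisely the integrand governing the norm of $X^{\langle p_0,q_0\rangle}$; it is non-increasing by the integral-mean argument used in \eqref{eq:pq0}.

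For sufficiency, assume the scaling \eqref{eq:r1r2} together with (i) or (ii) and perform the substitution $u=t^r$ in the outer Lorentz norm. Combined with a distribution-function analysis of the product $t^{r/q}H(t^r)$ (whose superlevel sets are intervals by the monotonicity of $H$), one obtains the norm equivalence
\begin{equation*}
\|R_{q_0}f\|_{L^{r_2,s_2}(0,\infty)}\approx \|H\|_{L^{r_1,s_2}(0,\infty)}.
\end{equation*}
Since $H$ is non-increasing, $\|H\|_{L^{r_1,s_2}}$ is the unweighted-rearrangement expression, and the boundedness of $R_{q_0}$ is thereby recast as the embedding
\begin{equation*}
L^{r_1,s_1}(0,\infty)\hookrightarrow \bigl(L^{r_1,s_2}\bigr)^{\langle p_0,q_0\rangle}(0,\infty),
\end{equation*}
equivalently, as the weighted Hardy-type inequality
\begin{equation*}
\left\|u^{-1/p_0}\left(\int_0^u s^{q_0/p_0-1}f^*(s)^{q_0}\d s\right)^{1/q_0}\right\|_{L^{r_1,s_2}}\lesssim\|f\|_{L^{r_1,s_1}}.
\end{equation*}
Classical characterisations of such inequalities for non-increasing functions (Sawyer/Ari\~no--Muckenhoupt type) split exactly into the two declared regimes: the endpoint $r_1=p_0$ makes the outer power weight $u^{s_2/r_1-1-s_2/p_0}=u^{-1}$ critical, forcing $s_2=\infty$ together with the inner restriction $s_1\le q_0$ (case (i)); for $p_0<r_1\le q$, only the usual Lorentz nesting $s_1\le s_2$ is required (case (ii)). As a by-product, one reads off the explicit form of $(L^{r_1,s_1})^{\langle p_0,q_0\rangle}$ advertised in Remark~\ref{rem:Lorentzlorentz}.

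Necessity of \eqref{eq:r1r2} follows from a homogeneity test on $f=\chi_{(0,a)}$: a direct computation gives $R_{q_0}\chi_{(0,a)}(t)\approx\min\{t^{r/q},a^{1/p_0}t^{-1/p_1}\}$, whence $\|R_{q_0}\chi_{(0,a)}\|_{L^{r_2,s_2}}\approx a^{1/(rr_2)+1/q}$, while $\|\chi_{(0,a)}\|_{L^{r_1,s_1}}\approx a^{1/r_1}$, so letting $a$ range forces \eqref{eq:r1r2}. The Lorentz-index conditions in (i) and (ii) are pinned down by inserting truncated power functions $f(s)=s^{-1/r_1}\chi_{(\epsilon,1/\epsilon)}(s)$ (with appropriate logarithmic perturbations at $r_1=p_0$) and letting $\epsilon\to 0^+$.

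The main technical obstacle is the norm equivalence invoked above: because $R_{q_0}f(t)=t^{r/q}H(t^r)$ is a product of an increasing and a non-increasing factor, it is generally non-monotone, so $\|R_{q_0}f\|_{L^{r_2,s_2}}$ is not just the unrearranged weighted integral. Resolving this requires computing the distribution function of $t^{r/q}H(t^r)$ via the dilation $u=t^r$, using that the level sets of $u\mapsto u^{1/q}H(u)$ are (at most) intervals thanks to the monotonicity of $H$, and checking that the resulting two-sided estimate for $(R_{q_0}f)^*$ is compatible with \eqref{eq:r1r2}.
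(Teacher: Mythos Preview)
Your overall strategy coincides with the paper's: both reduce the question to the norm equivalence $\|R_{q_0}f\|_{L^{r_2,s_2}}\approx\|f\|_{(L^{r_1,s_2})^{\langle p_0,q_0\rangle}}$ and then analyse the embedding $L^{r_1,s_1}\hookrightarrow(L^{r_1,s_2})^{\langle p_0,q_0\rangle}$. Your necessity argument for \eqref{eq:r1r2} via characteristic functions is a legitimate alternative to the paper's dilation argument $f\mapsto f_\lambda$, and your appeal to Sawyer/Ari\~no--Muckenhoupt for the embedding is acceptable though less explicit than the paper's direct Hardy-inequality computation.

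The genuine gap is in your handling of the ``main technical obstacle''. You claim that the superlevel sets of $u\mapsto u^{1/q}H(u)$ are intervals ``thanks to the monotonicity of $H$'', but monotonicity of $H$ alone does not force a product $u^{1/q}H(u)$ of an increasing and a non-increasing factor to be unimodal; one can build piecewise-constant non-increasing $H$ for which $u^{1/q}H(u)$ oscillates. Without this, your distribution-function computation of $(R_{q_0}f)^*$ collapses, and the crucial norm equivalence is unproven. The paper sidesteps the issue entirely: it replaces $R_{q_0}f$ by the non-increasing envelope $t\mapsto\sup_{y\ge t}R_{q_0}f(y)$, uses \cite[Lemma~3.1(ii)]{GoPi2009} together with the Hardy--Littlewood--P\'olya principle to show $\|R_{q_0}f\|_{r_2,s_2}\approx\|\sup_{y\ge t}R_{q_0}f(y)\|_{r_2,s_2}$, and then invokes \cite[Theorem~3.2(i)]{GoOpPi} (which exploits the specific power-times-non-increasing structure noted in \eqref{eq:zmonot}) to drop the supremum and arrive at the unrearranged weighted expression. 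That machinery is precisely what is needed to make your norm equivalence rigorous; your level-set shortcut does not.
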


\begin{rem}
Let us briefly describe two pairs of parameters $r_1,r_2$ satisfying~\eqref{eq:r1r2}, which are in a certain sense extremal, namely
\begin{align*}
    r_1=p_0&\quad\text{and}\quad r_2=p_1\\
    \intertext{and}
    r_1=q&\quad\text{and}\quad r_2=\infty.
\end{align*} 
Additionally, since we intend to work only with normable Lorentz spaces, it is required to consider only $r_1\in \left[\max\left\{\frac{qp_0}{p_1(q-p_0)+p_0}, 1\right\}, q\right]$, on which we elaborate in Proposition~\ref{prop:BddS}. Moreover, note that when $r_1=q$, or equivalently $r_2=\infty$, one necessarily has $s_2=\infty$.
\end{rem}

\begin{proof}[Proof of Proposition~\ref{prop:Rq0}]
Let us first show that condition \eqref{eq:r1r2} is needed for the operator $R_{q_0}$ to be bounded. To this end, we denote $f_\lambda(t)=f(\lambda t)$ for $f\in\mathcal M(0, \infty)$. Then, changing variables, we get
\begin{equation}\label{eq:lr1}
    \|f_\lambda\|_{r_1, s_1}=\left\|t^{\frac{1}{r_1}-\frac{1}{s_1}}f_\lambda^*(t)\right\|_{s_1}\approx\left\|\lambda^{\frac{1}{s_1}-\frac{1}{r_1}-\frac{1}{s_1}}t^{\frac{1}{r_1}-\frac{1}{s_1}}f^*(t)\right\|_{s_1}=\lambda^{-\frac{1}{r_1}}\|f\|_{r_1, s_1}.
\end{equation}
Regarding the other term, changing variables, we similarly obtain
\begin{align*}
    \|R_{q_0}&f_\lambda(t)\|_{r_2, s_2}=\left\|t^{-\frac{1}{p_1}}\left(\int_0^{t^r}s^{\frac{q_0}{p_0}-1}f_\lambda^*(s)^{q_0}\d s\right)^\frac{1}{q_0}\right\|_{r_2, s_2}\approx\left\|\lambda^{-\frac{1}{p_0}}t^{-\frac{1}{p_1}}\left(\int_0^{\lambda t^r}s^{\frac{q_0}{p_0}-1}f^*(s)^{q_0}\d s\right)^\frac{1}{q_0}\right\|_{r_2, s_2}\\
    &=\left\|\lambda^{\frac{1}{rp_1}-\frac{1}{p_0}}\left(\lambda^{\frac{1}{r}}t\right)^{-\frac{1}{p_1}}\left(\int_0^{\left(\lambda^\frac{1}{r}t\right)^r}s^{\frac{q_0}{p_0}-1}f^*(s)^{q_0}\d s\right)^\frac{1}{q_0}\right\|_{r_2, s_2}=\lambda^{-\frac{1}{q}}\left\|R_{q_0}f(\lambda^\frac{1}{r}t)\right\|_{r_2, s_2}\approx \lambda^{-\frac{1}{q}-\frac{1}{rr_2}}\|R_{q_0}f\|_{r_2, s_2}.
\end{align*} 
Hence, putting these two equations together, we arrive exactly at condition \eqref{eq:r1r2}. The case $q_0=\infty$ follows the same lines.

Let us now characterize the boundedness of $R_{q_0}$ in the case $q_0<\infty$, the other case being once again similar.
Let us write 
\begin{equation}\label{eq:zmonot}
    R_{q_0}f(t)=t^{-\frac{1}{p_1}}\left(\int_0^{t^r}s^{\frac{q_0}{p_0}-1}f_\lambda^*(s)^{q_0}\d s\right)^\frac{1}{q_0}=t^{-\frac{1}{p_1}+\frac{r}{p_0}}\left(t^{-\frac{rq_0}{p_0}}\int_0^{t^r}s^{\frac{q_0}{p}-1}f_\lambda^*(s)^{q_0}\d s\right)^\frac{1}{q_0}
\end{equation} 
and observe that the quantity in parenthesis is non-increasing. Hence, by \cite[Lemma 3.1 (ii)]{GoPi2009} with $\beta=0$ and $\alpha=-\frac{1}{p_1}+\frac{r}{p_0}>0$ in their notation, we have 
\begin{align*}
    \int_0^x \sup_{y\geq t}R_{q_0}f(y)\d t\lesssim \int_0^x \left(R_{q_0}f\right)^*(t)\d t
\end{align*} 
for every $x\in (0, \infty)$ and $f\in\M(0, \infty)$. Consequently, by the Hardy-Littlewood-Pólya principle \eqref{eq:HLP}, we obtain
\begin{equation*}
    \|R_{q_0}f\|_{r_2, s_2}\approx\left\|\sup_{y\geq t}R_{q_0}f(y)\right\|_{r_2, s_2}=\left\|t^{\frac{1}{r_2}-\frac{1}{s_2}}\sup_{y\geq t} R_{q_0}f(y)\right\|_{s_2}.
\end{equation*} 
Now, if $s_2<\infty$, utilizing \eqref{eq:zmonot} once more, \cite[Theorem 3.2 (i)]{GoOpPi} tells us that
\begin{equation*}
    \left\|t^{\frac{1}{r_2}-\frac{1}{s_2}}\sup_{y\geq t} R_{q_0}f(y)\right\|_{s_2}\approx\left\|t^{\frac{1}{r_2}-\frac{1}{s_2}}R_{q_0}f(t)\right\|_{s_2}
\end{equation*} 
with weights
\begin{equation*}
    w(t)=t^{\frac{s_2}{r_2}-1},\quad u(t)=t^{\frac{r}{p_0}-\frac{1}{p_1}}\quad\text{and}\quad v(t)=t^{\frac{s_2}{r_2}+s_2\left(\frac{r}{p_0}-\frac{1}{p_1}\right)-1}
\end{equation*} 
in their notation. On the other hand, the case $s_2=\infty$ follows easily by exchanging the order of suprema. Now, changing variables and recalling \eqref{eq:r1r2}, we find that
\begin{equation}\label{eq:Rq0}
    \begin{split}
        \|R_{q_0}f\|_{r_2, s_2}&\approx\left\|t^{\frac{1}{r_2}-\frac{1}{s_2}+\frac{r}{p_0}-\frac{1}{p_1}}\left(t^{-\frac{rq_0}{p_0}}\int_0^{t^r}s^{\frac{q_0}{p_0}-1}f^*(s)^{q_0}\d s\right)^\frac{1}{q_0}\right\|_{s_2}\\
        &\approx \left\|t^{\frac{1}{rr_2}-\frac{1}{rs_2}+\frac{1}{p_0}-\frac{1}{rp_1}-\frac{1}{p_0}}\left(\int_0^t s^{\frac{q_0}{p}-1}f^*(s)^{q_0}\d s\right)^\frac{1}{q_0}t^{\frac{1}{rs_2}-\frac{1}{s_2}}\right\|_{s_2}\\
        &=\left\|t^{\frac{1}{r_1}-\frac{1}{s_2}}t^{-\frac{1}{p_0}}\left(\int_0^t s^{\frac{q_0}{p_0}-1}f^*(s)^{q_0}\d s\right)^\frac{1}{q_0}\right\|_{s_2}=\|f\|_{(L^{r_1, s_2})^{\langle p_0, q_0\rangle}}.
    \end{split}    
\end{equation}
Hence we have reduced the question of boundedness of $R_{q_0}$ into the question for which $s_1$ and $s_2$ the embedding
\begin{equation}
    L^{r_1, s_1}\emb \left(L^{r_1, s_2}\right)^{\langle p_0, q_0\rangle}
\end{equation} 
holds true. Here, we can immediately make few observations. In general, $s_1\leq s_2$ has to be satisfied. Indeed, this is an immediate consequence of embeddings of Lorentz spaces and \eqref{eq:EmbeddingsXpq}. Next notice that $r_1$ has to be greater than or equal to $p_0$. Indeed, if $r_1<p_0$, then $\left(L^{r_1, s_2}\right)^{\langle p_0, q_0\rangle}$ is empty by Remark~\ref{rem:nontriviality} and the embedding cannot hold. The same argument shows that if $r_1=p_0$, then necessarily $s_2=\infty$. Of course, we have already computed in \eqref{eq:pq0} that $\left(L^{p_0, \infty}\right)^{\langle p_0, q_0\rangle}=L^{p_0, q_0}$. Hence, when $r_1=p_0$, $s_1\leq q_0$ and $s_2=\infty$. 

We shall now show that when $r_1\in (p_0, q]$, then $\left(L^{r_1, s_2}\right)^{\langle p_0, q_0\rangle}=L^{r_1, s_2}$, which will conclude the proof. Additionally, we only need to show this equality for $q_0=1$, as then it will hold for every $q_0\in [1, \infty]$ by \eqref{eq:EmbeddingsXpq}.
We now distinguish two cases, based on whether $s_2$ is finite or infinite.

$s_2<\infty$: Using classical Hardy inequality, we estimate
\begin{align*}
    \|f\|_{\left(L^{r_1, s_2}\right)^{\langle p_0, 1\rangle}}&=\left(\int_0^\infty t^{\frac{s_2}{r_1}-1-\frac{s_2}{p_0}}\left(\int_0^t s^{\frac{1}{p_0}-1}f^*(s)\d s\right)^{s_2}\d t\right)^\frac{1}{s_2}\\
    &\lesssim\left(\int_0^\infty t^{\frac{s_2}{r_1}-1-\frac{s_2}{p_0}}t^{\frac{s_2}{p_0}-s_2}f^*(t)^{s_2}t^{s_2}\d t\right)^\frac{1}{s_2}=\left(\int_0^\infty t^{\frac{s_2}{r_1}-1}f^*(t)^{s_2}\right)^\frac{1}{s_2}=\|f\|_{r_1, s_2}.
\end{align*}

$s_2=\infty$: We estimate
\begin{align*}
    \|f\|_{\left(L^{r_1, \infty}\right)^{\langle p_0, 1\rangle}}=\sup_{t>0}t^{\frac{1}{r_1}-\frac{1}{p_0}}\int_0^t s^{\frac{1}{p_0}-1}f^*(s)\d s &=\sup_{t>0} t^{\frac{1}{r_1}-\frac{1}{p_0}+1}\left(s^{\frac{1}{p_0}-1}f^*(s)\right)^{**}(t)\\
    &\approx \sup_{t>0}t^{\frac{1}{r_1}-\frac{1}{p_0}+1+\frac{1}{p_0}-1}f^*(t)=\|f\|_{r_1, \infty},
\end{align*} 
where the last “$\approx$” holds true because $\frac{1}{r_1}-\frac{1}{p_0}<0$. This finishes the proof.
\end{proof}

\begin{rem}\label{rem:Lorentzlorentz}
    From the last part of the previous proof, it follows that whenever $r_1\in (p_0, \infty]$ and $s_1\in[1, \infty]$ are such that $L^{r_1, s_1}$ is a rearrangement-invariant function space, then $\left(L^{r_1, s_1}\right)^{\langle p_0, q_0\rangle}=L^{r_1, s_1}$ for every $q_0$ whenever $1\leq p_0<r_1\leq\infty$ 
\end{rem}
\begin{prop}\label{prop:BddS}
Let parameters be as in \eqref{eq:parameters} and let $r_1, r_2, s_1, s_2$ be such that the Lorentz spaces $L^{r_i, s_i}, i=1, 2$, are equivalent to rearrangement-invariant spaces. Then the operator $S$ is bounded from $L^{r_1, s_1}$ to $L^{r_2, s_2}$ if and only if \eqref{eq:r1r2} holds and one of the following conditions holds:

If $p_1\leq \frac{p_0(q-1)}{q-p_0}$:
\begin{enumerate}[(i)]
	\item $r_1=\frac{qp_0}{p_1(q-p_0)+p_0}\quad\text{and}\quad s_1=s_2=1$,
	\item $r_1\in (\frac{qp_0}{p_1(q-p_0)+p_0}, q)\quad\text{and}\quad 1\leq s_1\leq s_2\leq \infty$,
	\item $r_1=q\quad\text{and}\quad s_1=1$,
\end{enumerate}

else:
\begin{enumerate}[(i)]
    \item $r_1=1,\quad\text{and}\quad s_1=1\leq s_2\leq\infty$,
    \item $r_1\in (1, q)\quad\text{and}\quad 1\leq s_1\leq s_2\leq \infty$,
	\item $r_1=q\quad\text{and}\quad s_1=1$.
\end{enumerate}
\end{prop}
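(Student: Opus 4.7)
The plan follows the three-step template of the proof of Proposition~\ref{prop:Rq0}. First, the necessity of \eqref{eq:r1r2} is obtained by a dilation argument: setting $f_\lambda(t) = f(\lambda t)$, a direct change of variables gives $Sf_\lambda(t) = \lambda^{-1/q}Sf(\lambda^{1/r}t)$, whence $\|Sf_\lambda\|_{r_2, s_2} \approx \lambda^{-1/q - 1/(rr_2)}\|Sf\|_{r_2, s_2}$ while $\|f_\lambda\|_{r_1, s_1} \approx \lambda^{-1/r_1}\|f\|_{r_1, s_1}$, so matching exponents forces~\eqref{eq:r1r2}.

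Second, since $Sf$ is non-increasing on $(0, \infty)$, we have $(Sf)^* = Sf$. Substituting $u = t^r$ in the defining integral of $\|Sf\|_{r_2, s_2}$ rewrites it, up to a multiplicative constant, as
\begin{equation*}
\biggl\|u^{1/(rr_2)}\int_u^\infty s^{1/q - 1}f^*(s)\d s\biggr\|_{L^{s_2}(\d u/u)}.
\end{equation*}
Consequently, the boundedness $S\colon L^{r_1, s_1} \to L^{r_2, s_2}$ is equivalent to the weighted Hardy-type inequality
\begin{equation*}
\biggl\|u^{1/(rr_2)}\int_u^\infty s^{1/q - 1}g(s)\d s\biggr\|_{L^{s_2}(\d u/u)} \lesssim \|g\|_{L^{r_1, s_1}(0, \infty)}
\end{equation*}
for all non-increasing $g \geq 0$.

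Third, I would invoke the standard ``integration from above'' Hardy-type inequalities on the cone of non-increasing functions (in the spirit of Sawyer-Sinnamon-Stepanov characterizations). For $r_1$ in the open interval $(\max\{\frac{qp_0}{p_1(q-p_0)+p_0}, 1\}, q)$, the inequality reduces via Fubini or a direct weighted estimate to the standard Lorentz Hardy inequality and yields the classical condition $1 \leq s_1 \leq s_2 \leq \infty$. At the upper endpoint $r_1 = q$, scaling forces $r_2 = s_2 = \infty$, and the left-hand side collapses to $\sup_u \int_u^\infty s^{1/q-1}g(s)\d s = \|g\|_{L^{q, 1}}$, forcing $s_1 = 1$. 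At the lower endpoint, a direct Fubini computation shows that the Hardy threshold $r_1 = \frac{qp_0}{p_1(q-p_0)+p_0}$ with $s_1 = s_2 = 1$ is an exact equality case; this threshold is admissible (i.e.\ at least $1$) precisely when $p_1 \leq \frac{p_0(q-1)}{q-p_0}$, which pins down the first dichotomy. When $p_1$ exceeds this value, the threshold falls below $1$ and must be truncated at $r_1 = 1$ by the normability constraint~\eqref{eq:lri}; there $s_1 = 1$ is still required to control the integral $\int_0^\infty g(s)\d s$ arising on the right, while $s_2$ is free.

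The main obstacle I anticipate is the delicate endpoint analysis, and in particular distinguishing the two dichotomy cases through the normability threshold $r_1 \geq 1$, which interacts nontrivially with the Muckenhoupt-type weight conditions at the boundary values of $r_1$ and dictates the sharp choice of the Lorentz fine parameters $s_1, s_2$.
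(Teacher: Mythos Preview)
Your scaling argument for the necessity of \eqref{eq:r1r2} matches the paper exactly. The substantive difference is in how you handle the interior range and the sharpness of the fine indices. You propose to rewrite $\|Sf\|_{r_2,s_2}$ as a Copson-type functional and then appeal to Sawyer--Sinnamon--Stepanov characterizations of weighted Hardy inequalities on the monotone cone. The paper takes a more elementary and self-contained route: it computes the two endpoint equalities $\|Sf\|_{\infty}=\|f\|_{q,1}$ and, via Fubini, $\|Sf\|_{1}=\|f\|_{\frac{qp_0}{p_1(q-p_0)+p_0},1}$, then invokes Marcinkiewicz interpolation to obtain sufficiency throughout the open segment, and finally proves sharpness of $s_1\le s_2$ by testing against the explicit function $f_0(t)=\min\{1,t^{-1/r_1}\log(e-1+t)^{-1/s_2}\}$. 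Your approach would also work and has the advantage of giving necessity and sufficiency in one stroke, but it imports heavier machinery; the paper's argument needs nothing beyond Marcinkiewicz and a single logarithmic counterexample.

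One small correction to your endpoint discussion: at $r_1=1$ in the second dichotomy, the reason $s_1=1$ is forced is simply the normability constraint \eqref{eq:lri} (the space $L^{1,s_1}$ is rearrangement-invariant only for $s_1=1$), not any need to ``control $\int_0^\infty g(s)\d s$''. Likewise, $s_2$ is free there precisely because the condition $p_1>\tfrac{p_0(q-1)}{q-p_0}$ is equivalent to $r_2>1$ under \eqref{eq:r1r2}, so $L^{r_2,s_2}$ is normable for all $s_2\ge 1$.
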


\begin{proof}
We prove the case $p_1\leq \frac{p_0(q-1)}{q-p_0}$, the latter case being similar. First, it can be shown that the validity of \eqref{eq:r1r2} is necessary for boundedness of $S$ similarly as it was shown for the operator $R_{q_0}$ in Proposition~\ref{prop:Rq0} via a scaling argument.  Condition $p_1\leq \frac{p_0(q-1)}{q-p_0}$ is equivalent to $r_1\coloneqq \frac{qp_0}{p_1(q-p_0)+p_0}\geq1$. Moreover, relation \eqref{eq:r1r2} maps this value of $r_1$ onto $r_2=1$. From the definition of the operator $S$ we immediately obtain equality $\|Sf\|_\infty=\|f\|_{q, 1}$. As for the other end-point estimate, utilizing Fubini's theorem, we obtain
\begin{align*}
	\|Sf\|_{1}&=\int_0^\infty \int_{t^r}^\infty s^{\frac{1}{q}-1}f^*(s)\d s\d t=\int_0^\infty\int_0^{s^\frac{1}{r}}s^{\frac{1}{q}-1}f^*(s)\d t\d s\\
    &=\int_0^\infty s^{\frac{1}{q}-1+\frac{1}{r}}f^*(s)\d s=\int_{0}^\infty s^{\frac{p_1(q-p_0)+p_0}{qp_0}-1}f^*(s)\d s=\|f\|_{\frac{qp_0}{p_1(q-p_0)+p_0}, 1}.
\end{align*}    
Hence, by Marcinkiewicz's interpolation theorem, $S$ is bounded from $L^{r_1, s_1}$ to $L^{r_2, s_2}$ whenever $r_1\in \left(\frac{qp_0}{p_1(q-p_0)+p_0}, q\right)$ and $1\leq s_1\leq s_2\leq \infty$. Now we just need to show that the parameters $s_1$ and $s_2$ cannot assume any other values. First, if $r_1=\frac{qp_0}{p_1(q-p_0)+p_0}$ then $r_2=1$ and so, necessarily $s_2=1$. Moreover, as $\|Sf\|_1=\|f\|_{\frac{qp_0}{p_1(q-p_0)+p_0}, 1}$, the situation where $s_1>1$ cannot occur due to the sharp nesting of Lorentz spaces. Similar argument reveals that when $r_1=q$, then it must be so that $s_1=1$ and $s_2=\infty$. Finally, if $r_1\in \left(\frac{qp_0}{p_1(q-p_0)+p_0}, q\right)$, let us assume $s_2<s_1$. A change of variables and relation \eqref{eq:r1r2} then show that
\begin{equation*}
    \|Sf\|_{r_2, s_2}^{s_2}\approx\int_0^\infty t^{\frac{s_2}{r_1}-\frac{s_2}{q}-1}\left(\int_t^\infty s^{\frac{1}{q}-1}f^*(s)\d s\right)^{s_2}\d t.
\end{equation*} 
Let us now consider the function $f_0$ defined by $f_0(t)=f_0^*(t)=\min\{1, t^{-\frac{1}{r_1}}\log(e-1+t)^{-\frac{1}{s_2}}\}$ for $t>0$. Then a brief computation shows that $f_0\in L^{r_1, s_1}$, but
\begin{align*}
    \|Sf_0\|_{r_2, s_2}^{s_2}\gtrsim \int_1^\infty t^{\frac{s_2}{r_1}-\frac{s_2}{q}-1}\left(\int_t^{2t} s^{\frac{1}{q}-1}t^{-\frac{1}{r_1}}\log(t)^{-\frac{1}{s_2}}\d s\right)^{s_2}\d t\approx \int_1^\infty t^{\frac{s_2}{r_1}-\frac{s_2}{q}-1}t^{\frac{s_2}{q}-\frac{s_2}{r_1}}\log(t)^{-1}\d t=\infty.
\end{align*}
\end{proof}

Combining Propositions~\ref{prop:Rq0} and \ref{prop:BddS} we can state the following result, characterizing boundedness of our Calderón operators in the setting of Lorentz spaces.

\begin{thm}
Let parameters be as in \eqref{eq:parameters} and let $r_1, r_2, s_1, s_2$ be such that the Lorentz spaces $L^{r_i, s_i}, i=1, 2$, are equivalent to rearrangement-invariant spaces and let $q_0\in [1, \infty]$. Then the operators $R_{q_0}$ and $S$ are bounded from $L^{r_1, s_1}$ to $L^{r_2, s_2}$ if and only if \eqref{eq:r1r2} holds and one of the following conditions holds:
\begin{enumerate}[(i)]
	\item $r_1=p_0,\quad s_1\leq q_0\quad\text{and}\quad s_2=\infty$,
	\item $r_1\in (p_0, q)\quad\text{and}\quad 1\leq s_1\leq s_2\leq \infty$,
	\item $r_1=q,\quad\text{and}\quad s_1=1$.
\end{enumerate}
\end{thm}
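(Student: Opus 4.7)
The plan is to obtain the theorem as a direct intersection of the two preceding propositions: since the simultaneous boundedness of $R_{q_0}$ and $S$ from $L^{r_1,s_1}$ to $L^{r_2,s_2}$ is equivalent to each of them being bounded, the admissible quadruples $(r_1,r_2,s_1,s_2)$ are precisely those appearing in both Proposition~\ref{prop:Rq0} and Proposition~\ref{prop:BddS}. The scaling relation \eqref{eq:r1r2} is necessary for either operator individually, hence a common necessary condition.

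The first step is the key structural observation that in the presence of \eqref{eq:r1r2}, the threshold $p_0$ coming from $R_{q_0}$ is always the binding one. Indeed, the inequality
\begin{equation*}
    p_0 \;>\; \frac{qp_0}{p_1(q-p_0)+p_0}
\end{equation*}
is equivalent, after clearing denominators, to $p_1(q-p_0) > q-p_0$, i.e. to $p_1>1$, which is guaranteed by~\eqref{eq:parameters}; likewise $p_0>1$. Thus in both sub-cases of Proposition~\ref{prop:BddS}, whenever $r_1\geq p_0$ the corresponding $S$-condition is automatically the non-endpoint clause ``$1\le s_1\le s_2\le\infty$'' (for $r_1\in(p_0,q)$) or the clause for $r_1=q$. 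In particular, the endpoint items (i) of Proposition~\ref{prop:BddS} never contribute after intersecting with $r_1\geq p_0$.

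Next I would match the three cases of Proposition~\ref{prop:Rq0} with the surviving cases of Proposition~\ref{prop:BddS}:

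\begin{itemize}
\item If $r_1=p_0$, Proposition~\ref{prop:Rq0}(i) forces $s_1\le q_0$ and $s_2=\infty$. Since $p_0\in\bigl(\tfrac{qp_0}{p_1(q-p_0)+p_0},q\bigr)$ by the observation above, Proposition~\ref{prop:BddS}(ii) demands only $1\le s_1\le s_2\le \infty$, which is automatic. This is item (i) of the theorem.

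\item If $r_1\in(p_0,q)$, both propositions yield the same unrestricted band $1\le s_1\le s_2\le\infty$. This is item (ii).

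\item If $r_1=q$, Proposition~\ref{prop:Rq0}(ii) leaves only the constraint that $L^{r_2,s_2}$ be a rearrangement-invariant space (with $r_2=\infty$, forcing $s_2=\infty$), while Proposition~\ref{prop:BddS}(iii) additionally requires $s_1=1$. This is item (iii).
\end{itemize}

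No real obstacle is expected; the only point where one must be careful is verifying that $p_0$ lies strictly between $\tfrac{qp_0}{p_1(q-p_0)+p_0}$ (respectively $1$, in the ``else'' branch of Proposition~\ref{prop:BddS}) and $q$, so that the condition from $R_{q_0}$ is always the sharper one and the borderline items of Proposition~\ref{prop:BddS} are absorbed. Once this is noted, the proof reduces to mechanically intersecting the admissible sets.
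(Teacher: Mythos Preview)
Your proposal is correct and follows precisely the paper's approach: the theorem is stated immediately after Propositions~\ref{prop:Rq0} and~\ref{prop:BddS} with the sentence ``Combining Propositions~\ref{prop:Rq0} and~\ref{prop:BddS} we can state the following result,'' and no further proof is given. Your write-up in fact supplies more detail than the paper does, in particular the verification that $p_0>\frac{qp_0}{p_1(q-p_0)+p_0}$ (equivalently $p_1>1$) and $p_0>1$, which ensures the endpoint clauses of Proposition~\ref{prop:BddS} are never active once $r_1\ge p_0$.
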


\begin{rem}
    It follows from Propositions~\ref{prop:Rq0} and \ref{prop:BddS} that the boundedness of the operator $R_{q_0}$ from $L^{r_1, s_1}$ to $L^{r_2, s_2}$ does not imply the boundedness of the operator $S$ in this situation and vice versa. It follows that neither Calderón suboperator can be omitted in the sense that, when wrapped in rearrangement-invariant norms, one term would bound the other.
\end{rem}

\section{Main results}
This section characterizes the boundedness of all linear operators satisfying \eqref{eq:tpq} via two one-dimensional Hardy-type operators.
We begin by stating and proving the Calderón-type theorem in the diagonal setting $q_0=q_1$.

\begin{thm}\label{thm:main}
Let parameters be as in \eqref{eq:parameters} with $q_0=q_1$ and let $X(\R, \mu)\subset(L^{p_0, q_0}+L^{q, 1})(\R, \mu)$ be a~rearrangement-invariant space. Then the following statements are equivalent.
\begin{enumerate}[(i)]
	\item Every linear operator satisfying \eqref{eq:tpq} is bounded from $X(\R, \mu)$ to $Y^{\langle p_1, q_0\rangle}(\S, \nu)$.
	\item Every quasilinear operator satisfying \eqref{eq:tpq} is bounded from $X(\R, \mu)$ to $Y^{\langle p_1, q_0\rangle}(\S, \nu)$.
	\item The operators $R_{q_0}$ and $S$ are bounded from $\bar X(0, \mu(\R))$ to $\bar Y(0, \nu(\S))$.
\end{enumerate}
\end{thm}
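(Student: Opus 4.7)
The implication (ii) $\Rightarrow$ (i) is immediate, so the plan is to close the cycle via (iii) $\Rightarrow$ (ii) and (i) $\Rightarrow$ (iii).

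For (iii) $\Rightarrow$ (ii), I would invoke Proposition~\ref{prop:KINT}(iii) with $q_0=q_1$, which gives the pointwise estimate
\begin{equation*}
t^{-\frac{1}{p_1}}\left(\int_0^{t}s^{\frac{q_0}{p_1}-1}|Tf|^*(s)^{q_0}\d s\right)^{\frac{1}{q_0}}\lesssim R_{q_0}f^*(t)+Sf^*(t),\quad t\in(0,\nu(\S)).
\end{equation*}
The left-hand side is precisely the function whose $\bar Y$ norm equals $\|Tf\|_{Y^{\langle p_1,q_0\rangle}}$. Applying $\|\cdot\|_{\bar Y}$, using the triangle inequality, the hypothesized boundedness of $R_{q_0}$ and $S$ from $\bar X$ to $\bar Y$, and the rearrangement invariance $\|f\|_X=\|f^*\|_{\bar X}$, one obtains $\|Tf\|_{Y^{\langle p_1,q_0\rangle}}\lesssim\|f\|_X$, which is (ii).

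The crucial direction is (i) $\Rightarrow$ (iii). The plan is to construct, for each $f\in X(\R,\mu)$, two linear operators $T_R, T_S\colon(L^{p_0,q_0}+L^{q,1})(\R,\mu)\to\M(\S,\nu)$ satisfying \eqref{eq:tpq} with constants independent of $f$, and such that
\begin{equation*}
\|R_{q_0}f^*\|_{\bar Y}\lesssim\|T_Rf\|_{Y^{\langle p_1,q_0\rangle}}\quad\text{and}\quad\|Sf^*\|_{\bar Y}\lesssim\|T_Sf\|_{Y^{\langle p_1,q_0\rangle}}.
\end{equation*}
Hypothesis (i) applied to $T_R, T_S$ then yields the right-hand sides are $\lesssim\|f\|_X=\|f^*\|_{\bar X}$; since every non-negative non-increasing $h\in\bar X(0,\mu(\R))$ arises as $f^*$ for some $f\in X(\R,\mu)$ by the non-atomicity of $\mu$, this delivers $R_{q_0},S\colon\bar X\to\bar Y$, which is (iii).

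The construction of $T_R$ and $T_S$ exploits the non-atomic $\sigma$-finite structure: one fixes measure-preserving transformations $\omega\colon\R\to(0,\mu(\R))$ with $|f|=f^*\circ\omega$ and $\tau\colon(0,\nu(\S))\to\S$. The operator $T_S$ is modelled on the weighted Copson-type operator $g\mapsto\int_{t^r}^\infty s^{\frac{1}{q}-1}(g\circ\omega^{-1})(s)\,\d s$, viewed on $\S$ through $\tau^{-1}$, while $T_R$ is assembled as a countable superposition of rank-one operators $g\mapsto\sum_k\left(\int_{E_k}g\,\d\mu\right)\phi_k\circ\tau^{-1}$ indexed by dyadic level sets $E_k$ of $f^*$, with the profiles $\phi_k$ calibrated against the scale $r=\frac{p_0q}{p_1(q-p_0)}$ via the Holmstedt-type identities from the preliminaries. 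The principal obstacle is this simultaneous balancing: ensuring that both endpoint estimates of \eqref{eq:tpq} hold with constants independent of $f$ while the rearrangements of $T_Rf$ and $T_Sf$ remain large enough to witness $R_{q_0}f^*$ and $Sf^*$ in $\bar Y$ norm. The remark after Proposition~\ref{prop:BddS}, warning that neither Calderón suboperator dominates the other in $\bar Y$, is the structural reason a single witness cannot suffice and the construction must split into two independent pieces.
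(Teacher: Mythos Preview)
Your overall cycle and the argument for (iii) $\Rightarrow$ (ii) match the paper exactly. For (i) $\Rightarrow$ (iii), your treatment of $S$ via the linear Copson-type operator $g\mapsto\int_{t^r}^\infty s^{\frac{1}{q}-1}g(s)\,\d s$ transferred through measure-preserving maps is also essentially the paper's approach, though the transfer is more delicate than you indicate: a global $\omega\colon\R\to(0,\mu(\R))$ with $|f|=f^*\circ\omega$ need not exist, so the paper passes to $(0,\infty)$ via the Calder\'on--Mityagin linearization (\cite[Chapter~3, Theorem~2.13]{BS}), and the map back to $(\S,\nu)$ requires the image function to vanish at infinity before \cite[Chapter~2, Corollary~7.6]{BS} applies.

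The genuine gap is your $T_R$. The rank-one superposition indexed by dyadic level sets of $f^*$ is not the paper's route and, as stated, is too vague to verify both endpoints of \eqref{eq:tpq} with uniform constants \emph{and} the lower bound $\|R_{q_0}f^*\|_{\bar Y}\lesssim\|T_Rf\|_{Y^{\langle p_1,q_0\rangle}}$ simultaneously; you name this as the principal obstacle but do not resolve it. The paper's idea is different and much more direct. A change of variables gives
\begin{equation*}
R_{q_0}f(t)\approx t^{-\frac{1}{p_1}}\left(\int_0^t s^{\frac{q_0}{p_1}-1}\bigl[f^*(s^r)\,s^{\frac{rp_1-p_0}{p_0p_1}}\bigr]^{q_0}\,\d s\right)^{\frac{1}{q_0}},
\end{equation*}
so $\|R_{q_0}f\|_{\bar Y}$ is, up to replacing the bracketed function by its rearrangement, \emph{already} a $Y^{\langle p_1,q_0\rangle}$ norm. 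The witness operator is then the single explicit linear map $\tilde Tg(t)=t^{\frac{rp_1-p_0}{p_0p_1}}g^{**}(t^r)$ (a Hardy average composed with a power-weighted dilation), which pointwise dominates $f^*(t^r)\,t^{\frac{rp_1-p_0}{p_0p_1}}$ and is checked by hand to satisfy both endpoints. The passage from the unrearranged bracket to its rearrangement inside the $Y^{\langle p_1,q_0\rangle}$ norm is handled by the Hardy--Littlewood--P\'olya principle combined with \cite[Lemma~3.1(ii)]{GoPi2009}, which compares $\int_0^x\sup_{y\ge s}y^{\alpha}h(y)\,\d s$ with $\int_0^x(y^{\alpha}h)^*(s)\,\d s$ for non-increasing $h$. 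This is the missing idea: rather than assembling $T_R$ from atoms adapted to $f$, recognise $R_{q_0}f$ itself as the $Y^{\langle p_1,q_0\rangle}$ functional applied to one explicit function of $f$, and take $T_R$ to be the simplest linear operator dominating that function.
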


\begin{proof}
We begin with the implication $(iii)\Rightarrow(ii)$. Let $T$ be a quasilinear operator satisfying \eqref{eq:tpq}. By Proposition~\ref{prop:KINT} we have that
\begin{equation*}
    \begin{split}
        \|Tf\|_{Y^{\langle p_1, q_0\rangle}(\S, \nu)}&=\left\|t^{-\frac{1}{p_1}}\left(\int_0^{t}s^{\frac{q_0}{p_1}-1}\abs{Tf}^*(s)^{q_0}\d s\right)^\frac{1}{q_0}\right\|_{\bar Y(0, \nu(\S))}\\
        &\lesssim\|R_{q_0}f^*\|_{\bar Y(0, \nu(\S))}+\|Sf^*\|_{\bar Y(0, \nu(\S))}\lesssim\|f^*\|_{\bar X(0, \mu(\R))}=\|f\|_{X(\R, \mu)},\quad f\in\M^+(\R, \mu).
    \end{split}
\end{equation*} Since the implication $(ii)\Rightarrow(i)$ is trivial, it remains to prove $(i)\Rightarrow (iii)$. We begin with the operator $R_{q_0}$. We prove the case when $q_0<\infty$, as the proof of the remaining case $q_0=\infty$ is analogous. Using a change of variables, we have
\begin{equation}\label{eq:Rq012}
    \left\|R_{q_0}f\right\|_{\bar Y}=\left\|\frac{1}{t^\frac{1}{p_1}}\left(\int_0^{t^r} s^{\frac{q_0}{p_0}-1}f^*(s)^{q_0}\d s\right)^\frac{1}{q_0}\right\|_{\bar Y}\approx\left\|\frac{1}{t^\frac{1}{p_1}}\left(\int_0^t s^{\frac{q_0}{p_1}-1}[f^*(s^r)s^\frac{rp_1-p_0}{p_0p_1}]^{q_0}\d s\right)^\frac{1}{q_0}\right\|_{\bar Y}.
\end{equation}
We now define an operator on $\M^+(0, \infty)$ by
\begin{equation}\label{eq:recovery}
	\tilde Tf(t)=t^\frac{rp_1-p_0}{p_0p_1}f^{**}(t^r),\quad f\in\M^+(0, \infty), t>0.
\end{equation}
Let now $g=g^*\in\bar X(0, \mu(\R))$ be fixed. Without loss of generality, we may assume that $\supp g$ is bounded. Since $(\R, \mu)$ is nonatomic, we find $g_0\in\M^+(\R, \mu)$ be such that $g^*=g_0^*$. By virtue of \cite[Chapter 3, Theorem 2.13]{BS} we find $T_1\colon (L^1+L^\infty)(\R, \mu)\to (L^1+L^\infty)(0, \infty)$ satisfying $T_1f=\chi_{(0, \mu(\R))}T_1f$ for every $f\in(L^1+L^\infty)(\R, \mu)$, $\max\{\|T_1\|_{L^1(\R, \mu)\to L^1(0, \infty)}, \|T_1\|_{L^\infty(\R, \mu)\to L^\infty(0, \infty)}\}\leq 1$ and $T_1g_0=g^*$. Furthermore, the second property, by virtue of \cite[Chapter 3, Theorem 2.2]{BS}, implies that $\|T\|_{Z(\R, \mu)\to Z(0, \infty)}\leq 1$ for every rearrangement-invariant space $Z$.
We further define $$\tilde T_2f(t)=t^\frac{rp_1-p_0}{p_0p_1}\frac{1}{t^r}\int_0^{t^r}f(s)\chi_{(0, \mu(\R))}(s)\d s$$ and $T_2f(t)=\tilde T_2 f(t)\chi_{(0, \nu(S))}(t)$ for every $f\in\mathcal (L^1+L^\infty)(0, \infty)$ and $t>0$. Let us observe that $(\tilde T_2\circ T_1 f)(t)\leq t^\frac{rp_1-p_0}{p_0p_1}(T_1f)^{**}(t^r)$ for every $f\in \M(\R, \mu)$ and $t>0$. 
We now find a function $h\in\M^+(\S, \nu)$ satisfying $h^*=(T_2 g)^*$. Now, since the support of $g$ is bounded, $g\in L^1(0, \abs{\supp g})$. This implies that $h^*(t)\to0, t\to\infty$. Thus, by \cite[Chapter 2, Corollary 7.6]{BS} we may find a measure-preserving transformation $\sigma\colon \supp h\to \supp h^*$ such that $h=(h^*\circ\sigma)\chi_{\{\supp h\}}$ $\nu$-a.e. Setting $T_3f_0(x)=f_0(\sigma(x))\chi_{\{\supp h\}}$ for $f_0\in\M(0, \infty)$ and $x\in\S$, we have by \cite[Chapter 2, Proposition 7.2]{BS} that $(T_3f_0)^*=(f_0\chi_{\{\supp h\}})^*$ a.e. in $(0, \infty)$. 
Finally, we define the operator $T\colon \M(\R, \mu)\to \M(\S, \nu)$ by
\begin{equation}
T=T_3\circ T_2\circ T_1.
\end{equation} 
Observe that $T$ is a linear map, being a composition of linear maps. We show that $T$ satisfies \eqref{eq:tpq}. First, recalling the definition of $r$, we estimate
\begin{align*}
	\|Tf\|_{L^\infty(\S, \nu)}&=\|T_2\circ T_1 f\|_{L^\infty(0, \nu(\S))}\leq \|\tilde T_2 \circ T_1f\|_{L^\infty(0, \infty)}\leq\sup_{t>0}t^\frac{rp_1-p_0}{p_0p_1}(T_1f)^{**}(t^r)\\
	&=\sup_{t>0}t^\frac{rp_1-p_0}{rp_0p_1}(T_1f)^{**}(t)=\sup_{t>0}t^\frac{1}{q}(T_1f)^{**}(t)\approx \|T_1f\|_{L^{q,\infty}(0, \infty)}\lesssim\|T_1f\|_{L^{q, 1}(0, \infty)}\leq \|f\|_{L^{q,1}(\R, \mu)}
\end{align*} 
for every $f\in L^{q, 1}(\R, \mu)$.
To show that $T\colon L^{p_0, q_0}(\R, \mu)\to L^{p_1, q_0}(\S, \nu)$, it suffices to show that $\tilde T_2$ is bounded from $L^{p_0, q_0}(0, \infty)$ to $L^{p_1, q_0}(0, \infty)$. Indeed, once this is proved, we will get that
\begin{align*}
\|Tf\|_{L^{p_1, q_0}(\S, \nu)}=\|T_2\circ T_1f\|_{L^{p_1, q_0}(0, \nu(\S))}\leq \|\tilde T_2\circ T_1f\|_{L^{p_1, q_0}(0, \infty)}\lesssim \|T_1f\|_{L^{p_0, q_0}(0, \infty)}\leq\|f\|_{L^{p_0, q_0}(\R, \mu)}
\end{align*} 
for every $f\in L^{p_0, q_0}(\R, \mu)$.

Hence, let us prove the boundedness of $\tilde T_2$ from $L^{p_0, q_0}(0, \infty)$ to $L^{p_1, q_0}(0, \infty)$. We estimate
\begin{equation*}
    \|\tilde T_2f\|_{L^{p_1, q_0}(0, \infty)}\leq \left\|t^{\frac{1}{p_1}-\frac{1}{q_0}}\sup_{s\geq t}f^{**}(s^r)s^\frac{rp_1-p_0}{p_0p_1}\right\|_{L^{q_0}(0, \infty)}\lesssim \left\|t^{\frac{1}{p_1}-\frac{1}{q_0}}f^{**}(t^r)t^\frac{rp_1-p_0}{p_0p_1}\right\|_{L^{q_0}(0, \infty)}\approx \|f\|_{L^{p_0, q_0}(0, \infty)}. 
\end{equation*}
Here, the second inequality is \cite[Theorem 3.2 (i)]{BS} with weights
\begin{equation*}
    u(t)=t^\frac{rp_1-p_0}{p_0p_1},\quad v(t)=t^{\frac{q_0}{p_1}+q_0\frac{rp_1-p_0}{p_0p_1}-1}\quad\text{and}\quad w(t)=t^{\frac{q_0}{p_1}-1}
\end{equation*} 
in their notation. Finally, the last equivalence is a substitution $t^r\mapsto t$ combined with the boundedness of $f\mapsto f^{**}$ on $L^{p_0, q_0}(0, \infty)$.

Hence we conclude that $T$ satisfies \eqref{eq:tpq} and so $T\colon X(\R, \mu)\to Y^{\langle p_1, q_0\rangle}(\S, \nu)$ by assumption.

Coming back to \eqref{eq:Rq012} we now aim to show that 
\begin{equation}\label{eq:Rq1}
    \begin{split}
        &\left\|\frac{1}{t^\frac{1}{p_1}}\left(\int_0^t s^{\frac{q_0}{p_1}-1}[f^*(s^r)s^\frac{rp_1-p_0}{p_0p_1}]^{q_0}\d s\right)^\frac{1}{q_0}\right\|_{\bar Y}\\
        \lesssim &\left\|\frac{1}{t^\frac{1}{p_1}}\left(\int_0^t s^{\frac{q_0}{p_1}-1}[f^*(u^r)u^\frac{rp_1-p_0}{p_0p_1}]^*(s)^{q_0}\d s\right)^\frac{1}{q_0}\right\|_{\bar Y},\quad f\in\M^+(0, \infty).
    \end{split}
\end{equation} 
Now, by \cite[Lemma 3.1 (ii)]{GoPi2009}, we have that 
\begin{equation}\label{eq:supHLP}
    \int_0^t\sup_{s\leq y}y^\frac{rp_1-p_0}{p_0p_1}h(y)\d s\lesssim \int_0^t [x^\frac{rp_1-p_0}{p_0p_1}h(x)]^*(s)\d s
\end{equation} 
for every $h\geq 0$ non-increasing with $\beta=0$ and $\alpha =\frac{rp_1-p_0}{p_0p_1}>0$ in their notation, in which the constants in `$\lesssim$' do not depend neither on $t$ nor on $h$. Hence the Hardy-Littlewood-Pólya principle \eqref{eq:HLP} yields
\begin{equation*}
    \begin{split}
            \left(\int_0^t s^{\frac{q_0}{p_1}-1}[f^*(s^r)s^\frac{rp_1-p_0}{p_0p_1}]^{q_0}\d s\right)^\frac{1}{q_0}&\leq \left(\int_0^t s^{\frac{q_0}{p_1}-1}[\sup_{y\geq s}f^*(y^r)y^\frac{rp_1-p_0}{p_0p_1}]^{q_0}\d s\right)^\frac{1}{q_0}\\
            &\lesssim  \left(\int_0^t s^{\frac{q_0}{p_1}-1}[f^*(x^r)x^\frac{rp_1-p_0}{p_0p_1}]^*(s)^{q_0}\d s\right)^\frac{1}{q_0}
    \end{split}
\end{equation*}
which implies \eqref{eq:Rq1}. Finally, using a change of variables, \eqref{eq:Rq1} and definitions of operators $T_i$, we estimate
\begin{equation*}
    \begin{split}
        \|Rq_0g\|_{\bar Y(0, \nu(\S)}&\approx\left\|\frac{1}{t^\frac{1}{p_1}}\left(\int_0^t s^{\frac{q_0}{p_1}-1}[g^*(s^r)s^\frac{rp_1-p_0}{p_0p_1}]^{q_0}\d s\right)^\frac{1}{q_0}\right\|_{\bar Y(0, \nu(\S))}\\
        &=\left\|\frac{1}{t^\frac{1}{p_1}}\left(\int_0^t s^{\frac{q_0}{p_1}-1}[\chi_{(0, \nu(\S))}(s)T_1g_0(s^r)s^\frac{rp_1-p_0}{p_0p_1}]^{q_0}\d s\right)^\frac{1}{q_0}\right\|_{\bar Y(0, \nu(\S))}\\
    	&\lesssim \left\|\frac{1}{t^\frac{1}{p_1}}\left(\int_0^t s^{\frac{q_0}{p_1}-1}[\chi_{(0, \nu(\S))}(u)(T_1g_0)(u^r)u^\frac{rp_1-p_0}{p_0p_1}]^*(s)^{q_0}\d s\right)^\frac{1}{q_0}\right\|_{\bar Y(0, \nu(\S))}\\
    	&\leq\left\|\frac{1}{t^\frac{1}{p_1}}\left(\int_0^t s^{\frac{q_0}{p_1}-1}[(T_2\circ T_1g_0)(u)]^*(s)^{q_0}\d s\right)^\frac{1}{q_0}\right\|_{\bar Y(0, \nu(\S))}\\
    	&\leq \left\|\frac{1}{t^\frac{1}{p_1}}\left(\int_0^t s^{\frac{q_0}{p_1}-1}[(Tg_0)(u)]^*(s)^{q_0}\d s\right)^\frac{1}{q_0}\right\|_{\bar Y(0, \nu(\S))}\\
    	&=\|Tg_0\|_{Y^{\langle p_1, q_0\rangle}(\S, \nu)}\lesssim \|g_0\|_{X(\R, \mu)}=\|g\|_{\bar X(0, \mu(\R))}.
    \end{split}
\end{equation*} 
In other words, $R_{q_0}$ is bounded from $\bar X(0, \mu(\R))$ to $\bar Y(0, \nu(\S))$.

Finally, we only need to handle the operator $S$. Here, we first define an auxiliary operator $S_1$ by 
\begin{equation*}
    S_1f(t)=\int_{t^r}^\infty s^{\frac{1}{q}-1}f(s)\d s,\quad f\in\M(0, \infty), t>0.
\end{equation*} 
Using the same arguments as in Proposition~\ref{prop:BddS} for operator $S_1$ in place of $S$, we get $S_1\colon L^{p_0, q_0}(0, \infty)\to L^{p_1, q_0}(0, \infty)$ and $S_1\colon L^{q, 1}(0, \infty)\to L^\infty(0, \infty)$. Obviously $S_1f^*=Sf$ for every $f\in\M(0, \infty)$ and so it suffices to establish the result for operator $S_1$. To this end, we only need to construct the appropriate transformations between the measure spaces so that $S_1$ satisfies \eqref{eq:tpq}.

Let $g=g^*\in \bar X(0, \mu(\R))$ be given. Once again we find $g_0\in\M^+(\R, \mu)$ and $T_1\colon (L^{1}+L^{\infty})(\R, \mu)\to T\colon (L^{1}+L^{\infty})(0, \infty)$ satisfying $g_0^*=g^*$, $\max\{\|T_1\|_{L^1\to L^1}, \|T_1\|_{L^\infty\to L^\infty}\}\leq 1$, $T_1g_0=g^*$and $T_1 f=\chi_{(0, \mu(\R))}T_1f$ for every $f\in\M^+(\R, \mu)$. Next, we find a function $h\in (\S, \nu)$ such that $h^*=(S_1\circ T_1f)\chi_{(0, \nu(\S))}$. Let us now observe that $S_1f(t)\to 0$ as $t$ approaches infinity for every $f\in(L^{p_0, q_0}+L^{q, 1})(0, \infty)$. For functions from $L^{q, 1}(0, \infty)$, this is nothing else than the dominated convergence theorem. If $f\in L^{p_0, q_0}(0, \infty)$, then we know that $S_1f\in L^{p_1, q_0}(0, \infty)$. 
Since $S_1f$ is non-increasing and $p_1<\infty$, necessarily $S_1f(t)\to 0, t\to\infty$. Hence, there exists a measure-preserving transformation $\sigma$ from $\supp h$ onto $\supp h^*$ such that $h=(h^*\circ \sigma)\chi_{\rm{supp} h}$ where $h(t)=S_1g^*(t)$. Set $T_2f(x)=f(\sigma(x))\chi_{\supp h}$ for $f\in\mathcal M(0, \infty)$ and $x\in\S$. Then we have $(T_2 f_0)^*= (f_0\chi_{\supp h})^*$ a.e. in $(0, \infty)$ for every $f_0\in\M^+(0, \infty)$.  Finally, we define an operator $\tilde S\colon \mathcal M(\R, \mu)\to \mathcal M(\S, \nu)$ by $\tilde Sg(x)=(T_2\circ S_1\circ T_1 g)(x)$ for $g\in\mathcal M(\R, \mu)$ and $x\in\S$. The operator $\tilde S$ is linear by construction. Next, we claim that $\tilde S$ satisfies \eqref{eq:tpq}. To show this, we verify its boundedness between appropriate spaces. We show boundedness from $L^{q, 1}(\R, \mu)$ to $L^{\infty}(\S, \nu)$. We estimate
\begin{equation}
    \|\tilde S f\|_{L^\infty(\S, \nu)}=\|T_2\circ S_1\circ T_1f\|_{L^\infty(\S, \nu)}\leq \|S\circ T_1f\|_{L^\infty(0, \infty)}\lesssim \|T_1f\|_{L^{q, 1}(0, \infty)}= \|f\|_{L^{q, 1}(\R, \mu)}
\end{equation} 
for every $f\in L^{q, 1}(\R, \mu)$. The proof of the boundedness from $L^{p_0, q_0}(\R, \mu)$ to $L^{p_1, q_0}(\S, \nu)$ is analogous. Hence $\tilde S\colon X(\R, \mu)\to Y^{\langle p_1, q_0\rangle}(\S, \nu)$. Finally, as $Y^{\langle p_1, q_0 \rangle}(\S, \nu)\emb Y(\S, \nu)$ and using the fact that $(T_2\circ S_1\circ T_1 g_0)^*=S_1g=Sg$ a.e. in $(0, \nu(\S))$, we obtain that 
\begin{equation*}
    \|Sg\|_{\bar Y(\S, \nu)}=\|T_2\circ S_1\circ T_1 g_0\|_{Y(\S, \nu)}=\|\tilde Sg_0\|_{Y(\S, \nu)}\leq \|\tilde Sg_0\|_{Y^{\langle p_1, q_0 \rangle}(\S, \nu)}\lesssim \|g_0\|_{X(\R, \mu)}=\|g\|_{\bar X(0, \mu(\R))}.
\end{equation*}
\end{proof}

What follows is a corollary of extrapolation nature which essentially tells us that the boundedness of operators from $L^{p_0, q_0}(\R, \mu)$ to $L^{p_1, q_0}(\S, \nu)$ is a stronger property than the boundedness of operators from $L^{p_0, q_1}(\R, \mu)$ to $L^{p_1, q_1}(\S, \nu)$ for $q_0\leq q_1$.

\begin{cor}\label{cor:q0q1}
Let parameters be as in \eqref{eq:parameters}. Let $X(\R, \mu)$ be a rearrangement-invariant space satisfying $X(\R, \mu)\subset (L^{p_0, q_0}+L^{q, 1})(\R, \mu)$. Consider the following statements.
\begin{enumerate}[(i)]
	\item Every linear operator satisfying 
		\begin{equation*}
			T\colon L^{p_0, q_0}(\R, \mu)\to L^{p_1, q_0}(\S, \nu)\quad\text{and}\quad T\colon L^{q, 1}(\R, \mu)\to L^\infty(\S, \nu)
		\end{equation*}
		is bounded from $X(\R, \mu)$ to $Y^{\langle p_1, q_0\rangle}(\S, \nu)$.
	\item Every linear operator satisfying 
		\begin{equation*}
			T\colon L^{p_0, q_1}(\R, \mu)\to L^{p_1, q_1}(\S, \nu)\quad\text{and}\quad T\colon L^{q, 1}(\R, \mu)\to L^\infty(\S, \nu)
		\end{equation*}
		is bounded from $X(\R, \mu)$ to $Y^{\langle p_1, q_1\rangle}(\S, \nu)$.
\end{enumerate} Then $(i)$ implies $(ii)$.
\end{cor}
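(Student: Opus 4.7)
The plan is to reduce both statements of the corollary to the boundedness of the associated Calder\'on operators by invoking Theorem~\ref{thm:main} twice, and then to compare the two Calder\'on operators directly via nesting of Lorentz spaces.

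First, since $q_0 \leq q_1$ we have $L^{p_0, q_0}(\R, \mu) \subset L^{p_0, q_1}(\R, \mu)$, so the hypothesis $X(\R, \mu) \subset (L^{p_0, q_0} + L^{q, 1})(\R, \mu)$ automatically yields $X(\R, \mu) \subset (L^{p_0, q_1} + L^{q, 1})(\R, \mu)$. Therefore Theorem~\ref{thm:main} applies to both configurations: statement $(i)$ is equivalent to $R_{q_0}, S \colon \bar X(0, \mu(\R)) \to \bar Y(0, \nu(\S))$ being bounded, while statement $(ii)$ is equivalent to the analogous boundedness of $R_{q_1}$ and $S$. Since the operator $S$ does not depend on the diagonal parameter and is the same in both cases, the only thing to prove is that boundedness of $R_{q_0}$ implies that of $R_{q_1}$.

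The key step is the pointwise inequality $R_{q_1} f^*(t) \lesssim R_{q_0} f^*(t)$ for every $f \in \M^+(\R, \mu)$ and $t > 0$, with a constant independent of $t$ and $f$. This is exactly the standard nesting $L^{p_0, q_0} \emb L^{p_0, q_1}$ applied to the truncated non-increasing function $f^* \chi_{(0, t^r)}$, giving
\begin{equation*}
    \left(\int_0^{t^r} s^{q_1/p_0 - 1} f^*(s)^{q_1} \d s\right)^{1/q_1} \lesssim \left(\int_0^{t^r} s^{q_0/p_0 - 1} f^*(s)^{q_0} \d s\right)^{1/q_0}
\end{equation*}
with a constant depending only on $p_0$, $q_0$, $q_1$. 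Multiplication by $t^{-1/p_1}$ produces the desired pointwise comparison, and taking $\bar Y$-norms yields $\|R_{q_1} f^*\|_{\bar Y} \lesssim \|R_{q_0} f^*\|_{\bar Y} \lesssim \|f^*\|_{\bar X}$.

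Combining the two equivalences given by Theorem~\ref{thm:main} with this pointwise comparison completes the implication $(i) \Rightarrow (ii)$. There is no genuine obstacle in this argument; the only point worth verifying explicitly is the uniformity in $t$ of the constant in the Lorentz nesting inequality, which is automatic since that constant depends only on the Lorentz indices and not on the underlying measure space.
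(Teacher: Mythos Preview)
Your proof is correct and follows essentially the same route as the paper's: reduce both statements to the boundedness of $R_{q_0}, S$ and $R_{q_1}, S$ via Theorem~\ref{thm:main}, then compare $R_{q_1}$ and $R_{q_0}$ pointwise by Lorentz nesting. Your explicit verification that $X(\R,\mu)\subset (L^{p_0,q_1}+L^{q,1})(\R,\mu)$ (needed to apply Theorem~\ref{thm:main} in the second configuration) is a detail the paper leaves implicit.
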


\begin{proof}
By Theorem~\ref{thm:main} statement $(i)$ (resp. $(ii)$) is equivalent to the boundedness of operators $S$ and $R_{q_0}$ (resp. $S$ and $R_{q_1}$) from $\bar X(0, \mu(\R))$ to $\bar Y(0, \nu(\S))$. Hence, it suffices to show that the boundedness of $R_{q_0}$ implies that of $R_{q_1}$. We however, by virtue of nesting properties of Lorentz spaces, know that
\begin{equation*}
    R_{q_1}f(t)\lesssim R_{q_0}f(t),\quad f\in\M^+(0, \infty), t>0.
\end{equation*} 
This and our assumption imply 
\begin{equation*}
    \|R_{q_1}f\|_{\bar Y}\lesssim\|R_{q_0}f\|_{\bar Y}\lesssim \|f\|_{\bar X},\quad f\in\M^+(0, \infty),
\end{equation*} 
whence the results follows.
\end{proof}

Let us now discuss whether condition $(i)$ is strictly stronger than condition $(ii)$ in Corollary~\ref{cor:q0q1} with $1\leq q_0<q_1\leq\infty$. To this end, let us define $\|f\|_{Y_{q_i}}=\|R_{q_i}f\|_{Y}$ for $i=0, 1$. Then $Y_{q_i}$ is a linear space and $Y_{q_0}$ is nonempty if and only if $Y_{q_1}$ is. It was shown in Proposition~\ref{prop:Rq0} above that when $Y$ is the Lorentz space $L^{r_2, s_2}$, then $Y_{q_i}=\left(L^{r_1, s_2}\right)^{\langle p_0, q_i\rangle}$, where $r_1$ and $r_2$ satisfy \eqref{eq:r1r2}. On taking $r_1=p_0$ and $s_2=\infty$, we hence have that $Y_{q_i}=L^{p_0, q_i}$. Taking $X=L^{p_0, q_2}$, where $q_2=\frac{1}{2}(q_0+q_1)$, we have that $R_{q_i}$ si bounded from $X$ to $Y$ if and only if $i=1$. Here $(L^{p_0, q_0}+L^{q, 1})$ does not contain $X$ but its intersection with $X$ forms a dense subspace of $X$. Similarly, it may happen that $Y_{q_0}=Y_{q_1}$ and so the conditions $(i)$ and $(ii)$ are equivalent.

We will now consider the case $q_0<q_1$. The need to separate this case from the `diagonal' case $q_0=q_1$ is mainly due to the problems that arise when we try to achieve an inequality similar to \eqref{eq:Rq012}, but with a~parameter $q_1$ in place of $q_0$ in the right-most expression. For instance, such inequality is impossible to achieve pointwise with the operator $\Tilde T$ defined in \eqref{eq:recovery}. In order to treat the case $q_0<q_1$, we will need to define a new non-linear operator $Y_{q_0, q_1}$ which we will see is closely related to some operators we have already encountered, see Remark~\ref{rem:Yq0q1}.

\begin{defn}
Let parameters be as in \eqref{eq:parameters} with $1\leq q_0< q_1\leq\infty$. We define operator $Y_{q_0, q_1}f$ by
\begin{equation}
    Y_{q_0, q_1}f(t)=\left(\int_0^t s^{\frac{rq_0}{p_0}-1}f^*(s^r)^{q_0}\d s\right)^\frac{q_1-q_0}{q_0q_1}t^{\frac{rq_0}{p_0q_1}-\frac{1}{p_1}}f^*(t^r)^\frac{q_0}{q_1},\quad f\in\M^+(0, \infty), t>0.
\end{equation}
\end{defn}

\begin{rem}\label{rem:Yq0q1}
Observe that if we formally set $q_0=q_1$, we have $$Y_{q_0, q_1}f(t)=t^{\frac{rp_1-p_0}{p_0p_1}}f^*(t^r).$$
In other words, we recover the operator which played a crucial role in the proof of Theorem~\ref{thm:main}. Moreover, if $q_1=\infty$, then $$Y_{q_0, \infty}f(t)=R_{q_0}f(t).$$
\end{rem}

The following proposition establishes that the operator $Y_{q_0, q_1}$ satisfies \eqref{eq:tpq}, and therefore does not deviate from our built framework.
\begin{prop}\label{prop:Y}
Let parameters be as in \eqref{eq:parameters} with $1\leq q_0<q_1\leq\infty$. Then $Y=Y_{q_0, q_1}$ is bounded from $L^{p_0, q_0}(0, \infty)$ and $L^{q, 1}(0, \infty)$ to $L^{p_1, q_1}(0, \infty)$ and $L^\infty(0, \infty)$ respectively.
\end{prop}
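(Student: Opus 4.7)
The plan is to establish each of the two estimates separately by direct calculation, relying on two main simplifications. The first is the change of variables $u = s^r$ in the inner integral, which converts $\int_0^t s^{rq_0/p_0 - 1} f^*(s^r)^{q_0}\,ds$ into $\frac{1}{r}\int_0^{t^r} u^{q_0/p_0 - 1} f^*(u)^{q_0}\,du$. The second is the identity $r\bigl(\tfrac{1}{p_0} - \tfrac{1}{q}\bigr) = \tfrac{1}{p_1}$, which is an immediate consequence of the definition $r = \tfrac{p_0 q}{p_1(q - p_0)}$ and which will be responsible for all the cancellations in the exponents.

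For the end-point $Y_{q_0, q_1}\colon L^{q,1}(0,\infty) \to L^\infty(0,\infty)$, I would plug the pointwise estimate $f^*(s) \leq s^{-1/q}\|f\|_{q,\infty} \lesssim s^{-1/q}\|f\|_{q,1}$ (which holds thanks to $L^{q,1} \subset L^{q,\infty}$) into both factors in the definition of $Y_{q_0,q_1}f(t)$. The integral factor then produces a positive power of $t$ (making use of $p_0 < q$ to ensure integrability), while the explicit prefactor produces a reciprocal negative power; the identity recalled above forces these two powers to cancel exactly, leaving a bound by $\|f\|_{q,1}$ uniformly in $t$.

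For the end-point $Y_{q_0, q_1}\colon L^{p_0, q_0}(0,\infty) \to L^{p_1, q_1}(0,\infty)$ in the range $q_1 < \infty$, set $A(t) = \int_0^t s^{rq_0/p_0 - 1} f^*(s^r)^{q_0}\,ds$, so that $A'(t) = t^{rq_0/p_0 - 1} f^*(t^r)^{q_0}$ a.e. I would then expand $\|Y_{q_0,q_1}f\|_{p_1, q_1}^{q_1} = \int_0^\infty t^{q_1/p_1 - 1}(Y_{q_0, q_1}f(t))^{q_1}\,dt$ and, via a short exponent count using once more the identity for $r$, recognise that the integrand simplifies exactly to $A(t)^{(q_1 - q_0)/q_0}\, A'(t)$, i.e.\ a constant multiple of $\frac{d}{dt}\bigl[A(t)^{q_1/q_0}\bigr]$. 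The fundamental theorem of calculus then collapses the whole integral into $\frac{q_0}{q_1} A(\infty)^{q_1/q_0}$, and one final substitution $u = s^r$ in $A(\infty)$ identifies this with a constant multiple of $\|f\|_{p_0, q_0}^{q_1}$.

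Finally, the boundary case $q_1 = \infty$ requires no new work: by Remark~\ref{rem:Yq0q1} we have $Y_{q_0, \infty}f = R_{q_0}f$, and the two required bounds for $R_{q_0}$ are immediate instances of Proposition~\ref{prop:Rq0}, namely case (i) with $(r_1, s_1) = (p_0, q_0)$ and $(r_2, s_2) = (p_1, \infty)$, and case (ii) with $(r_1, s_1) = (q, 1)$ and $(r_2, s_2) = (\infty, \infty)$; both pairs satisfy the scaling condition \eqref{eq:r1r2}. The main obstacle is purely bookkeeping: one needs to be patient enough to track the various exponents through the definition of $Y_{q_0,q_1}$ and recognise the telescoping $A^{(q_1-q_0)/q_0} A'$ structure in the second estimate; no conceptually new ingredient beyond the identity linking $r$, $p_0$, $p_1$ and $q$ is required.
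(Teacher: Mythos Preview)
Your argument for the $L^{q,1}\to L^\infty$ bound is correct, and in fact the direct substitution $f^*(s)\lesssim s^{-1/q}\|f\|_{q,1}$ is tidier than the paper's route through the intermediate quantity $\|f\|_{L^{p_0,1}(0,t)}$. The handling of $q_1=\infty$ via Remark~\ref{rem:Yq0q1} and Proposition~\ref{prop:Rq0} is also fine and matches the paper.

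The gap is in the estimate $Y_{q_0,q_1}\colon L^{p_0,q_0}\to L^{p_1,q_1}$ for $q_1<\infty$. Your ``expansion''
\[
\|Y_{q_0,q_1}f\|_{p_1,q_1}^{q_1}=\int_0^\infty t^{q_1/p_1-1}\bigl(Y_{q_0,q_1}f(t)\bigr)^{q_1}\,dt
\]
is not the Lorentz functional: by definition the integrand must contain $(Y_{q_0,q_1}f)^*(t)$, and $Y_{q_0,q_1}f$ is \emph{not} non-increasing in general (it is the product of the increasing factor $A(t)^{(q_1-q_0)/(q_0q_1)}$, a power of $t$ of indeterminate sign, and the decreasing factor $f^*(t^r)^{q_0/q_1}$). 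Your telescoping identity $\int_0^\infty t^{q_1/p_1-1}(Y_{q_0,q_1}f(t))^{q_1}\,dt=\tfrac{q_0}{q_1}A(\infty)^{q_1/q_0}$ is correct, and when $q_1\ge p_1$ the weight $t^{q_1/p_1-1}$ is non-decreasing so the unrearranged integral dominates the rearranged one and your argument goes through. But when $q_1<p_1$, which is permitted by~\eqref{eq:parameters}, the weight is decreasing and Hardy--Littlewood gives the \emph{opposite} inequality: the quantity you compute is only a lower bound for $\|Y_{q_0,q_1}f\|_{p_1,q_1}^{q_1}$. The paper deals with this by first using $(Y_{q_0,q_1}f)^*(t)\le\sup_{y\ge t}Y_{q_0,q_1}f(y)$, then pulling the monotone factor $A(y)^{(q_1-q_0)/q_0}$ out of the supremum as $\|f\|_{p_0,q_0}^{q_1-q_0}$, and finally invoking a weighted supremum lemma (\cite[Lemma~3.1(i)]{GoPi2009}) to remove the remaining $\sup_{y\ge t}$; it is precisely this passage through the decreasing majorant that your otherwise elegant $A^{(q_1-q_0)/q_0}A'$ computation is missing.
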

\begin{proof}
Let us begin with the case when $q_1<\infty$. We first check that $Y\colon L^{q, 1}(0, \infty)\to L^\infty(0, \infty)$. We have
\begin{align*}
	\sup_{t>0}Yf(t)&=\sup_{t>0}\left(\int_0^t s^{\frac{rq_0}{p_0}-1}f^*(s^r)^{q_0}\d s\right)^\frac{q_1-q_0}{q_0q_1}t^{\frac{rq_0}{p_0q_1}-\frac{1}{p_1}}f^*(t^r)^\frac{q_0}{q_1}\\
	&\approx \sup_{t>0}\left(\int_0^t s^{\frac{q_0}{p_0}-1}f^*(s)^{q_0}\d s\right)^\frac{q_1-q_0}{q_0q_1}t^{\frac{q_0}{p_0q_1}-\frac{1}{rp_1}}f^*(t)^\frac{q_0}{q_1}\\
	&\approx\sup_{t>0}\left(\int_0^t s^{\frac{q_0}{p_0}-1}f^*(s)^{q_0}\d s\right)^\frac{q_1-q_0}{q_0q_1}t^{-\frac{1}{rp_1}}\left(\int_0^t s^{\frac{1}{p_0}-1}\d s\right)^\frac{q_0}{q_1}f^*(t)^\frac{q_0}{q_1}\\
	&\leq \sup_{t>0} \|f\|_{L^{p_0, q_0}(0, t)}^\frac{q_1-q_0}{q_1}t^{-\frac{1}{rp_1}}\|f\|_{L^{p_0, 1}(0, t)}^{\frac{q_0}{q_1}}\lesssim \sup_{t>0}\|f\|_{L^{p_0, 1}(0, t)}t^{-\frac{1}{rp_1}}\\
	&=\sup_{t>0} t^{-\frac{1}{rp_1}}\int_0^t t^{\frac{1}{p_0}-1}f^*(s)\d s\leq\sup_{t>0} \int_0^t s^{\frac{1}{p_0}-\frac{1}{rp_1}-1}f^*(s)\d s=\sup_{t>0}\|f\|_{L^{q, 1}(0, t)}=\|f\|_{L^{q, 1}(0, \infty)},
\end{align*}
where all the estimates are independent of both $f$ and $t$. Next, we show that $Y\colon L^{p_0, q_0}\to L^{p_1, q_1}$. We estimate
\begin{align*}
	\|Yf\|_{L^{p_1, q_1}(0, \infty)}&=\left(\int_0^\infty t^{\frac{q_1}{p_1}-1}(Yf)^*(t)^{q_1}\d t\right)^\frac{1}{q_1}\\
	&\leq \left(\int_0^\infty t^{\frac{q_1}{p_1}-1}\sup_{y\geq t}\left(\int_0^y s^{\frac{rq_0}{p_0}-1}f^*(s^r)^{q_0}\d s\right)^\frac{q_1-q_0}{q_0}y^{\frac{rq_0}{p_0}-\frac{q_1}{p_1}}f^*(y^r)^{q_0}\d t\right)^\frac{1}{q_1}\\
	&\leq \|f\|_{L^{p_0, q_0}(0, \infty)}^\frac{q_1-q_0}{q_1}\left(\int_0^\infty t^{\frac{q_1}{p_1}-1}\sup_{y\geq t} y^{\frac{rq_0}{p_0}-\frac{q_1}{p_1}}f^*(y^r)^{q_0}\d t\right)^\frac{1}{q_1}\\
	&\lesssim \|f\|_{L^{p_0, q_0}(0, \infty)}^\frac{q_1-q_0}{q_1}\left(\int_0^\infty t^{\frac{q_1}{p_1}-1}t^{\frac{rq_0}{p_0}-\frac{q_1}{p_1}}f^*(t^r)^{q_0}\d t\right)^\frac{1}{q_1}\\
	&\approx \|f\|_{L^{p_0, q_0}(0, \infty)}^\frac{q_1-q_0}{q_1}\|f\|_{L^{p_0, q_0}(0, \infty)}^\frac{q_0}{q_1}=\|f\|_{L^{p_0, q_0}(0, \infty)},
\end{align*} 
where the penultimate line comes from \cite[Lemma 3.1.(i)]{GoPi2009}. For the remaining case $q_1=\infty$, by Remark~\ref{rem:Yq0q1} we have $Y_{q_0, \infty}f=R_{q_0}f$ for every $f\in \M(0, \infty)$. Thus, the result follows from Proposition~\ref{prop:Rq0}. 
\end{proof}

We now state a result which is analogous to Theorem~\ref{thm:main} in the case $q_1=\infty$ with the main exception that $q_0<\infty$.

\begin{thm}\label{thm:udia}
Let parameters be as in \eqref{eq:parameters} with $q_0<q_1=\infty$ and $X(\R, \mu)\subset(L^{p_0, q_0}+L^{q, 1})(\R, \mu)$ be a~rearrangement-invariant space. Then the following statements are equivalent.
\begin{enumerate}[(i)]
	\item Every sublinear operator satisfying \eqref{eq:tpq} is bounded from $X(\R, \mu)$ to $Y^{\langle p_1, \infty\rangle}(\S, \nu)$.
	\item Every quasilinear operator satisfying \eqref{eq:tpq} is bounded from $X(\R, \mu)$ to $Y^{\langle p_1, \infty\rangle}(\S, \nu)$.
	\item The operators $R_{q_0}$ and $S$ are bounded from $\bar X(0, \mu(\R))$ to $\bar Y(0, \nu(\S))$.
\end{enumerate}
\end{thm}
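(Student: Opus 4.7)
The plan is to follow the architecture of Theorem~\ref{thm:main}, making two modifications tailored to the endpoint $q_1 = \infty$: use the sup-type version of the Holmstedt formula, and replace the linear operator in the construction of the recovery function by a sublinear one. The implication (iii) $\Rightarrow$ (ii) is routine: by Proposition~\ref{prop:KINT} specialized to $q_1 = \infty$, any quasilinear $T$ satisfying \eqref{eq:tpq} obeys
\[
t^{-1/p_1} \sup_{0 < s \leq t} s^{1/p_1} |Tf|^*(s) \lesssim R_{q_0} f^*(t) + S f^*(t), \quad t \in (0, \nu(\S));
\]
taking the $\bar Y(0, \nu(\S))$ norm of both sides together with (iii) yields $\|Tf\|_{Y^{\langle p_1, \infty \rangle}} \lesssim \|f\|_X$. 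The implication (ii) $\Rightarrow$ (i) is immediate because sublinear operators form a subclass of quasilinear ones.

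For (i) $\Rightarrow$ (iii), fix $g = g^* \in \bar X(0, \mu(\R))$ with bounded support, choose $g_0 \in \M^+(\R, \mu)$ with $g_0^* = g^*$, and construct the linear maps $T_1$ (with $T_1 g_0 = g^*$ and of norm at most $1$ on $L^1$ and $L^\infty$) and $T_3$ (a measure-preserving transformation on the support of an auxiliary function $h$) exactly as in the proof of Theorem~\ref{thm:main}. The key substitution is to replace the linear averaging operator $\tilde T_2$ of that proof with the sublinear operator
\[
\tilde Z_2 h(t) = t^{-1/p_1} \left( \int_0^{t^r} s^{q_0/p_0 - 1} h^{**}(s)^{q_0} \, ds \right)^{1/q_0}.
\]
Sublinearity with constant $1$ follows from $(h_1 + h_2)^{**} \leq h_1^{**} + h_2^{**}$ and Minkowski's inequality in $L^{q_0}$. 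Endpoint boundedness is routine: Hardy's inequality (valid since $p_0 > 1$) gives $\int_0^\infty s^{q_0/p_0 - 1} h^{**}(s)^{q_0} \, ds \lesssim \|h\|_{L^{p_0, q_0}}^{q_0}$, so that $\tilde Z_2 h(t) \leq C t^{-1/p_1} \|h\|_{L^{p_0, q_0}}$ and hence $\tilde Z_2 \colon L^{p_0, q_0}(0, \infty) \to L^{p_1, \infty}(0, \infty)$ by a level-set argument; meanwhile, the pointwise estimate $h^{**}(s) \lesssim s^{-1/q} \|h\|_{L^{q, 1}}$ combined with explicit integration yields $\tilde Z_2 \colon L^{q, 1}(0, \infty) \to L^\infty(0, \infty)$. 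The composition $T = T_3 \circ \tilde Z_2 \circ T_1$ is therefore a sublinear operator satisfying \eqref{eq:tpq}, and (i) supplies $\|T g_0\|_{Y^{\langle p_1, \infty \rangle}} \lesssim \|g\|_{\bar X}$.

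To complete the argument for $R_{q_0}$, observe that $\tilde Z_2(T_1 g_0)(t) \geq R_{q_0} g(t)$ pointwise, a direct consequence of $g^{**} \geq g^*$. The measure-preserving construction of $T_3$ propagates this to $(T g_0)^* \geq (R_{q_0} g)^*$, and combining with the trivial bound $\sup_{0 < s \leq t} s^{1/p_1} (T g_0)^*(s) \geq t^{1/p_1} (T g_0)^*(t)$ produces
\[
\|T g_0\|_{Y^{\langle p_1, \infty \rangle}} \geq \|(T g_0)^*\|_{\bar Y} \geq \|R_{q_0} g\|_{\bar Y(0, \nu(\S))},
\]
establishing boundedness of $R_{q_0}$. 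For $S$, the linear (hence sublinear) operator $\tilde S$ from Theorem~\ref{thm:main} already satisfies \eqref{eq:tpq} via the inclusion $L^{p_1, q_0} \subset L^{p_1, \infty}$ and fulfills $(\tilde S g_0)^* = S g$, so the same chain of inequalities delivers boundedness of $S$. I expect the main obstacle to lie precisely in isolating the right sublinear substitute: the linear $\tilde T_2$ of Theorem~\ref{thm:main} yields only the reverse pointwise inequality $\tilde T_2 (T_1 g_0) \lesssim R_{q_0} g$ (via Hardy's inequality applied to $g^{**}$), which is insufficient here because the $Y^{\langle p_1, \infty \rangle}$ norm is weaker than $Y^{\langle p_1, q_0 \rangle}$ and can control $\bar Y$ only from above, forcing us to demand a pointwise lower bound in the opposite direction.
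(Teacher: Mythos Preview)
Your proposal is correct and follows essentially the same route as the paper: the key idea in both is that when $q_1=\infty$ the appropriate test operator for the implication (i)$\Rightarrow$(iii) is $R_{q_0}$ itself, sublinearized by replacing $f^*$ with $f^{**}$ (your $\tilde Z_2$), whose endpoint bounds the paper obtains from Proposition~\ref{prop:Y} via Remark~\ref{rem:Yq0q1} while you verify them directly. The recovery step is then simpler than in Theorem~\ref{thm:main} because the pointwise inequality $\tilde Z_2 g^* \geq R_{q_0} g$ together with $Y^{\langle p_1,\infty\rangle}\hookrightarrow Y$ suffices, bypassing the Hardy--Littlewood--P\'olya argument; the paper's terse proof implicitly relies on the same mechanism.
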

\begin{proof}
    By Proposition~\ref{prop:Y} the~operator $R_{q_0}$ is bounded from $L^{p_0, q_0}(0, \infty)$ and $L^{q, 1}(0, \infty)$ to $L^{p_1, \infty}(0, \infty)$ and $L^\infty(0, \infty)$, respectively, and is equivalent to a sublinear operator (by passing from $f^*$ to $f^{**}$ in its definition). Therefore, it suffices to find appropriate transformations between measure spaces so that “$R_{q_0}$” maps from $\mathcal M^+(\R, \mu)$ to $\mathcal M^+(\S, \nu)$. This can be done in a similar fashion as in Theorem~\ref{thm:main} and thus the proof is omitted.
\end{proof}

It is not difficult to see that in the case $q_0=1$, the operator $R_1$ can be linearized, as we need not take roots. Hence we can state the following corollary.

\begin{cor}
Let parameters be as in \eqref{eq:parameters} with $q_0=1$ and $q_1=\infty$. Let $X(\R, \mu)\subset(L^{p_0, 1}+L^{q, 1})(\R, \mu)$ be a~rearrangement-invariant space. Then the following statements are equivalent.
        \begin{enumerate}[(i)]
        \item Every linear operator satisfying \eqref{eq:tpq} is bounded from $X(\R, \mu)$ to $Y^{\langle p_1, \infty\rangle}(\S, \nu)$.
         \item Every quasilinear operator satisfying \eqref{eq:tpq} is bounded from $X(\R, \mu)$ to $Y^{\langle p_1, \infty\rangle}(\S, \nu)$.
        \item The operators $R_{1}$ and $S$ are bounded from $\bar X(0, \mu(\R))$ to $\bar Y(0, \nu(\S))$.
    \end{enumerate}
\end{cor}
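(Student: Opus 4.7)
The plan is to separate the easy implications from the hard one. $(iii)\Rightarrow(ii)$ is Theorem~\ref{thm:udia} specialised to $q_0=1,\,q_1=\infty$, and $(ii)\Rightarrow(i)$ is immediate since every linear operator is quasilinear. The substance of the corollary lies in $(i)\Rightarrow(iii)$, and the strategy is to rerun the proof of Theorem~\ref{thm:main}, verifying that both Calder\'on operators can be realised by genuinely \emph{linear} witness operators. The $S$-construction in the proof of Theorem~\ref{thm:main} already produces a linear operator $\tilde S$, so the same argument here gives a linear witness for $S$ (using $Y^{\langle p_1,\infty\rangle}(\S,\nu)\hookrightarrow Y(\S,\nu)$ to extract the bound). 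The novelty, exactly as the sentence preceding the corollary flags, is that when $q_0=1$ the operator $R_1 f(t)=t^{-1/p_1}\int_0^{t^r}s^{1/p_0-1}f^*(s)\,ds$ carries no $q_0$-th root, so its integral kernel $s^{1/p_0-1}$ can be applied linearly rather than through a sublinear surrogate built from $f^{**}$.

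Concretely, fix $g=g^*\in\bar X(0,\mu(\R))$ with bounded support, choose $g_0\in\M^+(\R,\mu)$ with $g_0^*=g^*$, and take the Bennett--Sharpley linear lift $T_1\colon(L^1+L^\infty)(\R,\mu)\to(L^1+L^\infty)(0,\infty)$ from \cite[Chapter~3, Theorem~2.13]{BS} satisfying $T_1g_0=g^*$, $T_1f=\chi_{(0,\mu(\R))}T_1f$, and $\|T_1\|_{Z\to Z}\leq 1$ on every rearrangement-invariant space $Z$. Introduce the linear operator
\begin{equation*}
T_2h(t)=\chi_{(0,\nu(\S))}(t)\cdot t^{-1/p_1}\int_0^{t^r}s^{1/p_0-1}h(s)\chi_{(0,\mu(\R))}(s)\,ds,\quad t>0,
\end{equation*}
for which $T_2T_1g_0=R_1g\cdot\chi_{(0,\nu(\S))}$, and transport to $\S$ by the standard measure-preserving machinery $T_3$ built from the rearrangement of $T_2T_1g_0$ as in the proof of Theorem~\ref{thm:main}. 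The composition $T=T_3\circ T_2\circ T_1$ is linear by construction and satisfies $(Tg_0)^*=(R_1g\cdot\chi_{(0,\nu(\S))})^*$.

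To verify that $T$ obeys \eqref{eq:tpq}, observe that the pointwise Hardy--Littlewood inequality yields $|T_2h|\leq R_1h$ for every $h$, whence Proposition~\ref{prop:Rq0} (applied with the admissible parameter choices $(r_1,s_1,r_2,s_2)=(p_0,1,p_1,\infty)$ and $(q,1,\infty,\infty)$) produces $\|T_2h\|_{L^{p_1,\infty}(0,\infty)}\lesssim\|h\|_{L^{p_0,1}(0,\infty)}$ and $\|T_2h\|_{L^\infty(0,\infty)}\lesssim\|h\|_{L^{q,1}(0,\infty)}$; composing with $T_1$ and $T_3$ preserves these bounds. Hypothesis~$(i)$ applied to this $T$ then gives $\|Tg_0\|_{Y^{\langle p_1,\infty\rangle}(\S,\nu)}\lesssim\|g\|_{\bar X(0,\mu(\R))}$, and the trivial pointwise estimate $(Tg_0)^*(t)\leq t^{-1/p_1}\sup_{0<s\leq t}s^{1/p_1}(Tg_0)^*(s)$, passed through $\|\cdot\|_{\bar Y(0,\nu(\S))}$, yields
\begin{equation*}
\|R_1g\|_{\bar Y(0,\nu(\S))}=\|(Tg_0)^*\|_{\bar Y(0,\nu(\S))}\leq\|Tg_0\|_{Y^{\langle p_1,\infty\rangle}(\S,\nu)}\lesssim\|g\|_{\bar X(0,\mu(\R))},
\end{equation*}
which is the required boundedness of $R_1$ from $\bar X(0,\mu(\R))$ to $\bar Y(0,\nu(\S))$. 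The only delicate point is the bookkeeping identifying $(Tg_0)^*$ with $(R_1g\cdot\chi_{(0,\nu(\S))})^*$ in $\bar Y(0,\nu(\S))$; this follows directly from the identity $T_2T_1g_0=R_1g\cdot\chi_{(0,\nu(\S))}$ together with the measure-preserving nature of $T_3$, and no new technical ingredient beyond those already developed in the proof of Theorem~\ref{thm:main} is needed.
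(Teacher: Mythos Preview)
Your proof is correct and follows essentially the same approach as the paper, which justifies the corollary only by the one-line remark that ``in the case $q_0=1$, the operator $R_1$ can be linearized, as we need not take roots.'' Your choice of $T_2h(t)=\chi_{(0,\nu(\S))}(t)\,t^{-1/p_1}\int_0^{t^r}s^{1/p_0-1}h(s)\,ds$ is precisely the direct linearization the paper has in mind, and by producing the identity $T_2T_1g_0=R_1g\cdot\chi_{(0,\nu(\S))}$ you in fact bypass the Hardy--Littlewood--P\'olya step \eqref{eq:Rq1} needed in the general-$q_0$ argument of Theorem~\ref{thm:main}.
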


The following theorem deals with the remaining case where $q_1<\infty$. Here, we are unable to recover a~complete analogue of Theorem~\ref{thm:main}. However, we are still able to recover the boundedness of the Calderón operator through the operator $Y_{q_0, q_1}$ which, by Proposition~\ref{prop:Y}, satisfies \eqref{eq:tpq}.
\begin{thm}
Let parameters be as in \eqref{eq:parameters} with $1\leq q_0<q_1<\infty$ and $X(\R, \mu)\subset (L^{p_0, q_0}+L^{q, 1})(\R, \mu)$ be a~rearrangement-invariant space. Let $Y(\S, \nu)$ be a~rearrangement-invariant space. Assume that $Y_{q_0, q_1}$ is bounded from $\bar X(0, \mu(\R))$ to $\bar Y^{\langle p_1, q_1\rangle}(0, \nu(\S))$. Then $R_{q_0}$ is bounded from $\bar X(0, \mu(\R))$ to $\bar Y(0, \nu(\S))$.
\end{thm}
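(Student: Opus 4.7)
The plan is to reduce the assertion to the pointwise inequality
\[
R_{q_0}f(t) \lesssim t^{-\frac{1}{p_1}}\left(\int_0^t s^{\frac{q_1}{p_1}-1}\bigl((Y_{q_0,q_1}f)^*(s)\bigr)^{q_1}\,\d s\right)^{\frac{1}{q_1}}
\]
for $f=f^*$, since taking the $\bar Y(0,\nu(\S))$-norm of the right-hand side is by definition $\|Y_{q_0,q_1}f\|_{\bar Y^{\langle p_1,q_1\rangle}}$, which is controlled by $\|f\|_{\bar X}$ through the hypothesis. To set things up, I would reduce to $f=f^*$ (both $R_{q_0}$ and $Y_{q_0,q_1}$ factor through the non-increasing rearrangement), introduce $F(t)=\int_0^t s^{rq_0/p_0-1}f^*(s^r)^{q_0}\,\d s$, and compute directly that $s^{q_1/p_1-1}(Y_{q_0,q_1}f)(s)^{q_1} = F(s)^{(q_1-q_0)/q_0}F'(s)$; integrating and invoking a change of variables yields the key identity $R_{q_0}f(t)^{q_1} \approx t^{-q_1/p_1}\int_0^t s^{q_1/p_1-1}(Y_{q_0,q_1}f)(s)^{q_1}\,\d s$. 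The remaining task is to pass from $(Y_{q_0,q_1}f)(s)$ under the integral to its rearrangement $(Y_{q_0,q_1}f)^*(s)$.

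When $q_1\le p_1$ the weight $s^{q_1/p_1-1}\chi_{(0,t)}$ is itself non-increasing, so it coincides with its own rearrangement on $(0,t)$ and the classical Hardy--Littlewood inequality yields
\[
\int_0^t s^{\frac{q_1}{p_1}-1}(Y_{q_0,q_1}f)^{q_1}(s)\,\d s \le \int_0^t s^{\frac{q_1}{p_1}-1}((Y_{q_0,q_1}f)^*(s))^{q_1}\,\d s
\]
immediately. In the case $q_1>p_1$ the weight is increasing and a direct application of Hardy--Littlewood only produces the weight $(t-u)^{q_1/p_1-1}$, which is not what is needed; here I would exploit the explicit structure of $Y_{q_0,q_1}f$ for $f=f^*$. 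Noting that integrating $F'$ over $(s/2,s)$ gives $F(s) \gtrsim s^{rq_0/p_0}f^*(s^r)^{q_0}$, one obtains the pointwise bound $Y_{q_0,q_1}f(s) \lesssim R_{q_0}f(s)$. Combining this with a careful level-set analysis of $Y_{q_0,q_1}f$—the values of $(Y_{q_0,q_1}f)^*$ on $(0,t)$ are linked through distribution identities to those of $(Y_{q_0,q_1}f)$ itself, and the structural bound prevents pathological concentration at large $s$—one recovers the desired inequality up to a multiplicative constant.

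Finally, to translate the pointwise estimate into the $\bar Y$-norm inequality, one must cope with the fact that $R_{q_0}f$ need not be non-increasing. However, it admits the factorisation $R_{q_0}f(t)=t^{\beta}v(t)$ with $\beta=p_0/(p_1(q-p_0))>0$ and $v$ non-increasing, exactly as in the proof of Proposition~\ref{prop:Rq0}, so \cite[Lemma~3.1(ii)]{GoPi2009} together with the Hardy--Littlewood--P\'olya principle \eqref{eq:HLP} gives $\|R_{q_0}f\|_{\bar Y} \approx \|\sup_{s\geq\cdot}R_{q_0}f(s)\|_{\bar Y}$. Since both $\sup_{s\geq t}R_{q_0}f(s)$ and the right-hand side $\omega(t)$ of the pointwise estimate are non-increasing in $t$, the desired norm bound follows. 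The principal obstacle is the passage to the rearrangement in the case $q_1>p_1$: the lack of a generic weighted Hardy--Littlewood inequality in this regime forces one to rely on the specific algebraic form of $Y_{q_0,q_1}f$, which is precisely what makes this theorem weaker than the clean equivalence available in Theorem~\ref{thm:main} and Theorem~\ref{thm:udia}.
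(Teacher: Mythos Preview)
Your identity $R_{q_0}f(t)^{q_1}\approx t^{-q_1/p_1}\int_0^t s^{q_1/p_1-1}(Y_{q_0,q_1}f)(s)^{q_1}\,\d s$, obtained by setting $F(t)=\int_0^t s^{rq_0/p_0-1}f^*(s^r)^{q_0}\,\d s$ and recognising $s^{q_1/p_1-1}(Y_{q_0,q_1}f)(s)^{q_1}=F(s)^{(q_1-q_0)/q_0}F'(s)$, is exactly the identity the paper uses, and your Hardy--Littlewood argument for the case $q_1\le p_1$ is correct (and in that regime slightly shorter than what the paper does).

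The gap is the case $q_1>p_1$. The phrase ``a careful level-set analysis \ldots\ one recovers the desired inequality'' is not an argument, and the pointwise bound $Y_{q_0,q_1}f(s)\lesssim R_{q_0}f(s)$, while true, does not by itself yield
\[
\int_0^t s^{\frac{q_1}{p_1}-1}(Y_{q_0,q_1}f)(s)^{q_1}\,\d s\lesssim\int_0^t s^{\frac{q_1}{p_1}-1}\bigl((Y_{q_0,q_1}f)^*(s)\bigr)^{q_1}\,\d s
\]
against an increasing weight. The paper avoids the case split altogether: it first replaces $(Yf)(s)^{q_1}$ by the non-increasing majorant $\sup_{y\ge s}(Yf)(y)^{q_1}$ and then exploits the factorisation $(Yf)(y)^{q_1}=y^{\alpha}h(y)$ with $h$ non-increasing and $\alpha=\frac{rp_1-p_0}{p_0p_1}>0$ --- essentially the same ``power times non-increasing'' structure you invoke in your final paragraph, but applied to $Yf$ rather than to $R_{q_0}f$. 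After a change of variable this puts the integral in the form $\int_0^{t^{q_1}} s^{\beta}\sup_{y\ge s}y^{\alpha}h(y)\,\d s$ with $\beta=\tfrac{1}{p_1}-1$, to which \cite[Lemma 3.1 (ii)]{GoPi2009} applies directly and gives the rearrangement bound for \emph{all} $q_1\in(q_0,\infty)$ at once. Your last paragraph is unnecessary: once the pointwise estimate $R_{q_0}f(t)\lesssim\omega(t)$ holds with $\omega$ non-increasing, monotonicity of the rearrangement already gives $\|R_{q_0}f\|_{\bar Y}\lesssim\|\omega\|_{\bar Y}$.
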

\begin{proof}
We estimate
\begin{equation}\label{eq:4.2.}
    \begin{split}
        \|R_{q_0}f\|_{\bar Y(0, \nu(\S))}&\approx\left\|t^{-\frac{1}{p_1}}\left(\int_0^t s^{\frac{rq_0}{p_0}-1}f^*(s^r)^{q_0}\d s\right)^\frac{1}{q_0}\right\|_{\bar Y(0, \nu(\S))}=\left\|t^{-\frac{1}{p_1}}\left(\left(\int_0^t s^{\frac{rq_0}{p_0}-1}f^*(s^r)^{q_0}\d s\right)^\frac{q_1}{q_0}\right)^\frac{1}{q_1}\right\|_{\bar Y(0, \nu(\S))}\\
    	&\approx \left\|t^{-\frac{1}{p_1}}\left(\int_0^t \left(\int_0^s \tau^{\frac{rq_0}{p_0}-1}f^*(\tau^r)^{q_0}\d \tau\right)^\frac{q_1-q_0}{q_0}s^{\frac{rq_0}{p_0}-1}f^*(s^r)^{q_0}\d s\right)^\frac{1}{q_1}\right\|_{\bar Y(0, \nu(\S))}\\
    	&=\left\|t^{-\frac{1}{p_1}}\left(\int_0^t s^{\frac{q_1}{p_1}-1}Yf(s)^{q_1}\d s\right)^\frac{1}{q_1}\right\|_{\bar Y(0, \nu(\S))}\leq \left\|t^{-\frac{1}{p_1}}\left(\int_0^t s^{\frac{q_1}{p_1}-1}\sup_{y\geq s}Yf(y)^{q_1}\d s\right)^\frac{1}{q_1}\right\|_{\bar Y(0, \nu(\S))}.
    \end{split}
\end{equation} 
We now claim that 
\begin{equation}\label{eq:claim1}
    \int_0^t s^{\frac{q_1}{p_1}-1}\sup_{y\geq s}Yf(y)^{q_1}\d s\lesssim \int_0^t s^{\frac{q_1}{p_1}-1}(Yf)^*(s)^{q_1}\d s,\quad f\in\M(0, \infty), t>0.
\end{equation}
Indeed, we have
\begin{align*}
	\int_0^t s^{\frac{q_1}{p_1}-1}&\sup_{y\geq s}Yf(y)^{q_1}\d s\leq\int_0^t s^{\frac{q_1}{p}-1}\sup_{y\geq s}\left(\int_0^y s^{\frac{rq_0}{p_0}-1}f^*(s^r)^{q_0}\d s\right)^\frac{q_1-q_0}{q_0}y^{\frac{rq_0}{p_0}-\frac{q_1}{p_1}}f^*(y^r)^{q_0}\d s\\
	&\approx \int_0^{t^{q_1}} s^{\frac{1}{p_1}-1}\sup_{y\geq s^\frac{1}{q_1}}\left(\int_0^y s^{\frac{rq_0}{p_0}-1}f^*(s^r)^{q_0}\d s\right)^\frac{q_1-q_0}{q_0}y^{\frac{rq_0}{p_0}-\frac{q_1}{p_1}}f^*(y^r)^{q_0}\d s\\
	&\approx\int_0^{t^{q_1}} s^{\frac{1}{p_1}-1}\sup_{y\geq s^\frac{1}{q_1}}\left(\int_0^{y^\frac{rq_0}{p_0}} f^*(\tau^\frac{p_0}{q_0})^{q_0}\d \tau\right)^\frac{q_1-q_0}{q_0}y^{\frac{rq_0}{p_0}-\frac{q_1}{p_1}}f^*(y^r)^{q_0}\d s\\
	&=\int_0^{t^{q_1}} s^{\frac{1}{p_1}-1}\sup_{y\geq s}\left(\int_0^{y^\frac{rq_0}{p_0q_1}} f^*(\tau^\frac{p_0}{q_0})^{q_0}\d \tau\right)^\frac{q_1-q_0}{q_0}y^{\frac{rq_0}{q_1p_0}-\frac{1}{p_1}}f^*(y^\frac{r}{q_1})^{q_0}\d s\\
	&=\int_0^{t^{q_1}} s^{\frac{1}{p_1}-1}\sup_{y\geq s}\left(y^{-\frac{rq_0}{p_0q_1}}\int_0^{y^\frac{rq_0}{p_0q_1}} f^*(\tau^\frac{p_0}{q_0})^{q_0}\d \tau\right)^\frac{q_1-q_0}{q_0}y^{\frac{rq_0}{p_0q_1}\cdot\frac{q_1-q_0}{q_0}}\cdot y^{\frac{rq_0}{q_1p_0}-\frac{1}{p_1}}f^*(y^\frac{r}{q_1})^{q_0}\d s\\
	&\lesssim\int_0^{t^{q_1}}s^{\frac{1}{p_1}-1}\left[\left(y^{-\frac{rq_0}{p_0q_1}}\int_0^{y^\frac{rq_0}{p_0q_1}} f^*(\tau^\frac{p_0}{q_0})^{q_0}\d \tau\right)^\frac{q_1-q_0}{q_0}y^{\frac{rq_0}{p_0q_1}\cdot\frac{q_1-q_0}{q_0}}\cdot y^{\frac{rq_0}{q_1p_0}-\frac{1}{p_1}}f^*(y^\frac{r}{q_1})^{q_0}\right]^*(s)\d s\\
	&=\int_0^{t^{q_1}}s^{\frac{1}{p_1}-1}\left[\left(y^{-\frac{rq_0}{p_0}}\int_0^{y^\frac{rq_0}{p_0}} f^*(\tau^\frac{p_0}{q_0})^{q_0}\d \tau\right)^\frac{q_1-q_0}{q_0}y^{\frac{rq_0}{p_0}\cdot\frac{q_1-q_0}{q_0}}\cdot y^{\frac{rq_0}{p_0}-\frac{q_1}{p_1}}f^*(y^r)^{q_0}\right]^*(s^\frac{1}{q_1})\d s\\
	&\approx \int_0^{t^{q_1}}s^{\frac{1}{p_1}-1}\left(Yf\right)^*(s^\frac{1}{q_1})^{q_1}\d s\approx \int_0^t s^{\frac{q_1}{p_1}-1}(Yf)^*(s)^{q_1}\d s.
\end{align*} 
Here the “$\lesssim$” is \cite[Lemma 3.1. (ii)]{GoPi2009} with 
\begin{equation*}
    \beta =\frac{1}{p_1}-1,\quad \alpha=\frac{rp_1-p_0}{p_0p_1},\quad\text{and}\quad h(y)=\left(y^{-\frac{rq_0}{p_0q_1}}\int_0^{y^\frac{rq_0}{p_0q_1}} f^*(\tau^\frac{p_0}{q_0})^{q_0}\d \tau\right)^\frac{q_1-q_0}{q_0}f^*(y^\frac{r}{q_1})^{q_0}
\end{equation*} 
in their notation. The last equality takes advantage of the fact that $f_\gamma(t)^*=f^*(t^\gamma)$, where $f_\gamma(t)=f(t^\gamma)$ with $\gamma=q_1$. Hence the claim \eqref{eq:claim1} is proved. Plugging this information back into \eqref{eq:4.2.} we have
\begin{equation*}
    \begin{split}
        \|R_{q_0}f\|_{\bar Y(0, \nu(\S))}&\leq\left\|t^{-\frac{1}{p_1}}\left(\int_0^t s^{\frac{q_1}{p_1}-1}\sup_{y\geq s}Yf(y)^{q_1}\d s\right)^\frac{1}{q_1}\right\|_{\bar Y(0, \nu(\S))}\\
    	&\lesssim\left\|t^{-\frac{1}{p_1}}\left( \int_0^t s^{\frac{q_1}{p_1}-1}(Yf)^*(s)^{q_1}\d s\right)^\frac{1}{q_1}\right\|_{\bar Y(0, \nu(\S))}=\|Yf\|_{\bar Y^{\langle p_1, q_1\rangle}(0, \nu(\S))}\lesssim \|f\|_{\bar X(0, \mu(\R))}.
    \end{split}
\end{equation*}
\end{proof}

\section*{Acknowledgments.}
The author would like to express his gratitude to Zdeněk Mihula and Luboš Pick for their many insightful comments.

\bibliographystyle{dabbrv}
\bibliography{bibliography}

@book {BS,
    AUTHOR = {Bennett, Colin and Sharpley, Robert},
     TITLE = {Interpolation of operators},
    SERIES = {Pure and Applied Mathematics},
    VOLUME = {129},
 PUBLISHER = {Academic Press, Inc., Boston, MA},
      YEAR = {1988},
     PAGES = {xiv+469},
      ISBN = {0-12-088730-4},
   MRCLASS = {46-02 (46E30 46Exx 46M35)},
  MRNUMBER = {928802},
MRREVIEWER = {Mario\ Milman},
}

@article {GoPi2009,
    AUTHOR = {Gogatishvili, Amiran and Pick, Lubo\v{s}},
     TITLE = {Calder\'on-type theorems for operators of nonstandard endpoint
              behaviour},
   JOURNAL = {Indiana Univ. Math. J.},
  FJOURNAL = {Indiana University Mathematics Journal},
    VOLUME = {58},
      YEAR = {2009},
    NUMBER = {4},
     PAGES = {1831--1851},
      ISSN = {0022-2518,1943-5258},
   MRCLASS = {46M35 (42B25 46E30)},
  MRNUMBER = {2542981},
MRREVIEWER = {Dorothee\ D.\ Haroske},
       DOI = {10.1512/iumj.2009.58.3636},
       URL = {https://doi.org/10.1512/iumj.2009.58.3636},
}

@article {Tu23,
    AUTHOR = {Tur\v{c}inov\'a, Hana},
     TITLE = {Basic functional properties of certain scale of
              rearrangement-invariant spaces},
   JOURNAL = {Math. Nachr.},
  FJOURNAL = {Mathematische Nachrichten},
    VOLUME = {296},
      YEAR = {2023},
    NUMBER = {8},
     PAGES = {3652--3675},
      ISSN = {0025-584X,1522-2616},
   MRCLASS = {46E30 (26D15 47B38)},
  MRNUMBER = {4626903},
MRREVIEWER = {Cenap\ Duyar},
       DOI = {10.1002/mana.202000463},
       URL = {https://doi.org/10.1002/mana.202000463},
}

@misc{MiPiSp,
      title={Potential trace inequalities via a {C}alder\'on-type theorem}, 
      author={Zden\v{e}k Mihula and Lubo\v{s} Pick and Daniel Spector},
      year={2025},
      eprint={2407.03986},
      archivePrefix={arXiv},
      primaryClass={math.FA},
      url={https://arxiv.org/abs/2407.03986}, 
}

@article {Ca1966,
    AUTHOR = {Calder\'on, A.-P.},
     TITLE = {Spaces between {$L\sp{1}$} and {$L\sp{\infty }$} and the
              theorem of {M}arcinkiewicz},
   JOURNAL = {Studia Math.},
  FJOURNAL = {Polska Akademia Nauk. Instytut Matematyczny. Studia
              Mathematica},
    VOLUME = {26},
      YEAR = {1966},
     PAGES = {273--299},
      ISSN = {0039-3223,1730-6337},
   MRCLASS = {46.35 (46.10)},
  MRNUMBER = {203444},
MRREVIEWER = {W.\ A. J. Luxemburg},
       DOI = {10.4064/sm-26-3-301-304},
       URL = {https://doi.org/10.4064/sm-26-3-301-304},
}

@article {CiPiSl20201,
    AUTHOR = {Cianchi, Andrea and Pick, Lubo\v{s} and Slav\'ikov\'a, Lenka},
     TITLE = {Sobolev embeddings in {O}rlicz and {L}orentz spaces with
              measures},
   JOURNAL = {J. Math. Anal. Appl.},
  FJOURNAL = {Journal of Mathematical Analysis and Applications},
    VOLUME = {485},
      YEAR = {2020},
    NUMBER = {2},
     PAGES = {123827, 31},
      ISSN = {0022-247X,1096-0813},
   MRCLASS = {46E30 (46E35)},
  MRNUMBER = {4052566},
       DOI = {10.1016/j.jmaa.2019.123827},
       URL = {https://doi.org/10.1016/j.jmaa.2019.123827},
}

@article {CiPiSl20202,
    AUTHOR = {Cianchi, Andrea and Pick, Lubo\v{s} and Slav\'ikov\'a, Lenka},
     TITLE = {Sobolev embeddings, rearrangement-invariant spaces and
              {F}rostman measures},
   JOURNAL = {Ann. Inst. H. Poincar\'e{} C Anal. Non Lin\'eaire},
  FJOURNAL = {Annales de l'Institut Henri Poincar\'e{} C. Analyse Non
              Lin\'eaire},
    VOLUME = {37},
      YEAR = {2020},
    NUMBER = {1},
     PAGES = {105--144},
      ISSN = {0294-1449,1873-1430},
   MRCLASS = {46E35 (46E30)},
  MRNUMBER = {4049918},
MRREVIEWER = {Florin\ Isaia},
       DOI = {10.1016/j.anihpc.2019.06.004},
       URL = {https://doi.org/10.1016/j.anihpc.2019.06.004},
}

@article {BeRu80,
    AUTHOR = {Bennett, Colin and Rudnick, Karl},
     TITLE = {On {L}orentz-{Z}ygmund spaces},
   JOURNAL = {Dissertationes Math. (Rozprawy Mat.)},
  FJOURNAL = {Polska Akademia Nauk. Instytut Matematyczny. Dissertationes
              Mathematicae. Rozprawy Matematyczne},
    VOLUME = {175},
      YEAR = {1980},
     PAGES = {67},
      ISSN = {0012-3862},
   MRCLASS = {42B30},
  MRNUMBER = {576995},
MRREVIEWER = {Alberto\ Torchinsky},
}

@article {BaGoMiPi22,
    AUTHOR = {Baena-Miret, Sergi and Gogatishvili, Amiran and Mihula, Zden\v{e}k and Pick, Lubo\v{s}},
     TITLE = {Reduction principle for {G}aussian {$K$}-inequality},
   JOURNAL = {J. Math. Anal. Appl.},
  FJOURNAL = {Journal of Mathematical Analysis and Applications},
    VOLUME = {516},
      YEAR = {2022},
    NUMBER = {2},
     PAGES = {Paper No. 126522, 23},
      ISSN = {0022-247X,1096-0813},
   MRCLASS = {46B70 (46E30 47H30)},
  MRNUMBER = {4459008},
MRREVIEWER = {Luz\ M.\ Fern\'andez-Cabrera},
       DOI = {10.1016/j.jmaa.2022.126522},
       URL = {https://doi.org/10.1016/j.jmaa.2022.126522},
}

@article {Zy56,
    AUTHOR = {Zygmund, A.},
     TITLE = {On a theorem of {M}arcinkiewicz concerning interpolation of
              operations},
   JOURNAL = {J. Math. Pures Appl. (9)},
  FJOURNAL = {Journal de Math\'ematiques Pures et Appliqu\'ees. Neuvi\`eme
              S\'erie},
    VOLUME = {35},
      YEAR = {1956},
     PAGES = {223--248},
      ISSN = {0021-7824,1776-3371},
   MRCLASS = {46.1X},
  MRNUMBER = {80887},
MRREVIEWER = {Franti\v sek\ Wolf},
}

@article {GoOpPi,
    AUTHOR = {Gogatishvili, Amiran and Opic, Bohum\'ir and Pick, Lubo\v{s}},
     TITLE = {Weighted inequalities for {H}ardy-type operators involving
              suprema},
   JOURNAL = {Collect. Math.},
  FJOURNAL = {Universitat de Barcelona. Collectanea Mathematica},
    VOLUME = {57},
      YEAR = {2006},
    NUMBER = {3},
     PAGES = {227--255},
      ISSN = {0010-0757,2038-4815},
   MRCLASS = {26D10 (42B25 47G10)},
  MRNUMBER = {2264321},
MRREVIEWER = {Parthena\ Avramidou},
}

@article {Mal:12,
    AUTHOR = {Mal\'{y}, Luk\'a\v{s}},
     TITLE = {Calder\'on-type theorems for operators with non-standard
              endpoint behavior on {L}orentz spaces},
   JOURNAL = {Math. Nachr.},
  FJOURNAL = {Mathematische Nachrichten},
    VOLUME = {285},
      YEAR = {2012},
    NUMBER = {11-12},
     PAGES = {1450--1465},
      ISSN = {0025-584X,1522-2616},
   MRCLASS = {46B70 (26D10 46B42 46E30)},
  MRNUMBER = {2959969},
MRREVIEWER = {Sergey\ V.\ Astashkin},
       DOI = {10.1002/mana.201100095},
       URL = {https://doi.org/10.1002/mana.201100095},
}

@article {Kor:24,
    AUTHOR = {Korobkov, Mikhail V. and Kristensen, Jan},
     TITLE = {The trace theorem, the {L}uzin {$N$}- and {M}orse-{S}ard
              properties for the sharp case of {S}obolev-{L}orentz mappings},
   JOURNAL = {J. Geom. Anal.},
  FJOURNAL = {Journal of Geometric Analysis},
    VOLUME = {28},
      YEAR = {2018},
    NUMBER = {3},
     PAGES = {2834--2856},
      ISSN = {1050-6926,1559-002X},
   MRCLASS = {58C25 (26B10 46E35 58K05)},
  MRNUMBER = {3833820},
MRREVIEWER = {Javier\ G\'omez Gil},
       DOI = {10.1007/s12220-017-9936-7},
       URL = {https://doi.org/10.1007/s12220-017-9936-7},
}

\end{document}